\documentclass[12pt]{amsart}
\usepackage[english]{babel}
\title{Mild parametrizations of power-subanalytic sets}
\author[S. Van Hille]{Siegfried Van Hille}
\address{KU Leuven, Celestijnenlaan 200B, 3001 Leu\-ven, Bel\-gium \newline Fields Institute, 222 College Street, Toronto M5T 3J1, Ontario, Canada}
\email{siegfried.vanhille@kuleuven.be \newline svanhill@fields.utoronto.ca}

\usepackage[margin = 3cm]{geometry}
\usepackage{amsfonts,amsmath,amsthm,amssymb,amsopn}
\usepackage{enumitem}
\usepackage{parskip}
\usepackage[colorlinks]{hyperref}
\hypersetup{
	linkcolor = {red},
	citecolor = {blue},
	urlcolor = {blue},
}
\urladdr{\href{https://sites.google.com/view/siegfriedvanhille/academic}{https://sites.google.com/view/siegfriedvanhille/academic}}

\DeclareMathOperator{\N}{\mathbb N}

\DeclareMathOperator{\Q}{\mathbb Q}
\DeclareMathOperator{\R}{\mathbb R}

\DeclareMathOperator{\im}{\text{Im}}
\DeclareMathOperator{\F}{\mathcal F}
\newcommand{\RKF}{\mathbb R^K_{\mathcal F}}
\newcommand{\Ran}{\mathbb R_{\text{an}}}
\newcommand{\RanR}{\mathbb R_{\text{an}}^{\mathbb R}}
\newcommand{\phinf}{\phi^\infty}
\newcommand{\cell}{\mathfrak C}

\theoremstyle{plain}
\newtheorem{theorem}{Theorem}[section]
\newtheorem{corollary}[theorem]{Corollary}
\newtheorem{lemma}[theorem]{Lemma}
\newtheorem{proposition}[theorem]{Proposition}
\newtheorem*{theorem*}{Theorem}
\newtheorem*{corollary*}{Corollary}
\newtheorem*{proposition*}{Proposition}
\newtheorem*{conj*}{Conjecture}

\theoremstyle{definition}
\newtheorem{definition}[theorem]{Definition}
\newtheorem*{conv*}{Convention}
\newtheorem{remark}[theorem]{Remark}

\makeatletter
\@namedef{subjclassname@2020}{%
  \textup{2020} Mathematics Subject Classification}
\makeatother

\begin{document}
\thanks{The author is partially supported by KU Leuven Grant IF C14/17/83.}
\keywords{Subanalytic sets, parametrization, rational points of bounded height}
\subjclass[2020]{Primary 03C98, 14P15; Secondary 11D99}

\begin{abstract}
We obtain two uniform parametrization theorems for families of bounded sets definable in $\RanR$. Let $X = \{X_t \subset (0,1)^n \mid t \in T\}$ be a definable family of sets $X_t$ of dimension at most $m$. Firstly, $X_t$ admits a $C^r$-parametrization consisting of $cr^m$ maps for some positive constant $c = c(X)$, which is uniform in $t$. Secondly, $X_t$ admits a $C$-mild parametrization for any $C>1$, which is also uniform in $t$.
\end{abstract}

\maketitle

\section{Introduction}\label{secintro}
A parametrization of a set $X \subseteq \R^n$ is a finite set of maps $(0,1)^m \to X$, where $m$ is the dimension of $X$, whose ranges together cover $X$. There are different types of parametrizations, depending on the properties that are imposed on these maps. In this paper, we study parametrizations with mild maps up to order $r$, where the maps are $r$ times continuously differentiable and there is a growth condition on these derivatives depending on the order. If $r = +\infty$, the maps are mild as defined by Pila in \cite{milddef}, while if $r \in \N$, these functions were introduced in \cite{unif}. See Definition \ref{defmild} for a precise definition of mild functions.

The two main results of this paper are the following parametrization results for power-subanalytic sets (see Section \ref{secomin} for a definition of power-subanalytic). Firstly, for each $r \in \N$ and power-subanalytic set $X \subseteq (0,1)^n$, there exists a parametrization of $X$ where the maps are mild up to order $r$. By a linear reparametrization of $(0,1)^m$, this result implies that $X$ has a $C^r$-parametrization consisting of $cr^m$ maps for some $c>0$ (see Corollary \ref{corcrpara}). Secondly, for each $C>1$, $X$ has a $C$-mild parametrization, where the maps are $C$-mild. Both results are uniform in the sense that if $X = \{X_t \mid t \in T\}$, where $T$ is some power-subanalytic set of parameters, then the number of maps only depends on $X$, i.e., the number is uniform in $t$.

Parametrizations were first constructed by Yomdin and Gromov in the study of entropy of dynamical systems \cite{entropyadd,entropy,gromov}. They constructed $C^r$-parametrizations for semi-algebraic sets. This allowed Yomdin to give a proof of Shub's Entropy Conjecture for $C^\infty$ maps in \cite{entropy}. The techniques of Yomdin and Gromov were generalized by Pila and Wilkie in \cite{countthm} to sets definable in an o-minimal structure. In this way the results obtained by Bombieri and Pila in \cite{determinant} could be generalized to what is now well known as the Counting Theorem (see Theorem \ref{thmcount}).

In order to improve these applications, it turns out to be interesting to know how many charts are needed, in terms of $r$ and the combinatorial data defining $X$, i.e. $m,n$ and, if $X$ is semi-algebraic, the complexity of $X$. Strong results in this fashion were recently obtained by Binyamini and Novikov in \cite{ccs}. They show that any subanalytic set has a $C^r$-parametrization of $cr^m$ charts for some $c>0$. Moreover, if $X$ is semi-algebraic, it is shown that $c$ depends polynomially on the complexity of $X$. Building on work of Burguet, Liao and Yang \cite{entropy2}, Yomdin proves in Appendix A of \cite{ccs} a conjecture of himself on the entropy of analytic maps in arbitrary dimension, which he and already proved in dimension two \cite{entropyanalytic}. In Appendix B, the authors of \cite{ccs} improve some results of Pila and Wilkie as well.

Another recent result has been obtained by Cluckers, Pila and Wilkie in \cite{unif}. They show that any power-subanalytic set $X$ has a $C^r$-parametrization, where the number of maps depends polynomially on $r$, uniform for families. They were mainly interested in the diophantine applications and obtain some similar results as in \cite{ccs}, but in the larger class of power-subananalytic sets. In view of the above, it seems natural to make this polynomial in $r$ more explicit in terms of the combinatorial data defining $X$. I have shown in \cite{smoothpara} that the polynomial has degree $m^3$ by precisely analyzing their method. In this paper, I improve their construction such that the degree can be taken to be $m$ (as in \cite{ccs}, but for the more general class of power-subanalytic sets), see Theorem \ref{thmcrpara}. In this way, I can improve one of the diophantine applications of \cite{unif}, see Corollary \ref{corratpoints}.

Parametrizations with mild maps were considered by Pila in \cite{milddef} in order to achieve an improvement (Wilkie's Conjecture, see Section \ref{secappl}) of the results in \cite{countthm} for Pfaffian curves. Such a parametrization, a mild parametrization, is harder to establish. For instance, if $X$ is definable in an o-minimal structure, it does not necessarily follows that $X$ has a mild parametrization  (see \cite{nomildthomas}). There are also issues with uniformity. In particular, Yomdin showed in \cite{counterex} that the semi-algebraic family of hyperbolas defined by $\{ (x,y) \in (0,1)^2 \mid xy = t \}$ does not have a parametrization with charts that are $0$-mild, where the number of maps is independent of $t$. I have recently shown in \cite{mildpara} that it has a $C$-mild parametrization for any $C>0$, uniform in $t$. The same holds for any power-subanalytic family of curves. However, it seems that this result cannot be generalized to higher dimensions, where we only achieve $C>1$  (see Section \ref{secdiscuss}).

By work of Jones, Miller and Thomas \cite{nulmild}, we know that any subanalytic set has a $0$-mild parametrization, but this result is not uniform. Finally, in \cite{ccs}, it is also shown that any subanalytic set has a $2$-mild parametrization, which is uniform. In this paper we extend this result to power-subanalytic sets and $C$-mild for each $C>1$. An important difference between the results here (and in \cite{ccs}) on mild parametrizations and the result of \cite{nulmild}, is that here, the maps of the parametrization are not definable within the same o-minimal structure, while this is the case in \cite{nulmild}. We refer to \cite{deryathesis} for more details on definability questions of mild parametrizations in o-minimal structures, and mild parametrization in other, larger, o-minimal structures.

The outline of the paper is as follows. In Section \ref{secprelim} some preliminary definitions and properties are recalled and the main theorems are precisely formulated. Most important is the so-called Pre-parametrization Theorem, which is the foundation of the proofs of the main results. Section \ref{secinterm} contains some key lemmas that will be used in the proofs these results. It also contains a modified proof of \cite[Proposition 4.1.5]{unif}, that is intended to make these proofs more clear. Finally, in Section \ref{secmain}, the main theorems are proved. I also discuss overlap and differences with earlier work, and the diophantine application.

\section{Preliminaries}\label{secprelim}
We introduce the notation of multivariate calculus, recall some important definitions and state the Pre-parametrization Theorem that will be crucial for the proofs of the main theorems.

\subsection{Multivariate calculus}
Let $r \in \N \cup \{+\infty\}$. A function $f: U \to \R^n$, where $U$ is an open subset of $\R^m$, is $C^r$ if all of its component functions are $r$ times continuously differentiable and for every $\nu \in \N^m$, with $|\nu| = \nu_1+\ldots+\nu_m \leq r$, we denote
\[
f^{(\nu)} = \left( \frac{\partial^{|\nu|}}{\partial x_1^{\nu_1} \cdots \partial x_m^{\nu_m}}f_1,\ldots, \frac{\partial^{|\nu|}}{\partial x_1^{\nu_1} \cdots \partial x_m^{\nu_m}}f_n \right).
\]
Furthermore, $\nu! = \nu_1!\cdots\nu_m!$, for $\mu \in \R^m$, we denote $x^\mu = x^{\mu_1}\cdots x^{\mu_m}$, and by definition we set $0^0 = 1$.

We will be dealing with arbitrary derivatives of compositions of functions. There is a formula for that, say a general version of the chain rule, which is known as the Fa\`a di Bruno formula.

\begin{proposition}[Fa\`a di Bruno, \cite{faa}]\label{faa}
Suppose that $r$ is a positive integer, $V \subseteq \R^d$ and $U \subseteq \R^e$ are open, $f: V \to \R$, $g: U \to V$ and that $f$ and $g$ are $C^r$. For any $x \in U$ and $\nu \in \N^e$ with $|\nu| \leq r$ we have that:
\[
(f \circ g)^{(\nu)}(x) = \sum_{1 \leq |\lambda| \leq |\nu|} f^{(\lambda)}(g(x)) \sum_{s = 1}^{|\nu|} \sum_{p_s(\nu,\lambda)} \nu! \prod_{j = 1}^s \frac{(g^{(l_j)}(x))^{k_j}}{k_j!(l_j!)^{|k_j|}},
\]
where $p_s(\nu,\lambda)$ is the set consisting of all $k_1,\ldots,k_s \in \N^d$ with $|k_i| > 0$ and $l_1,\ldots,l_s \in \N^e$ with $0 \prec l_1 \prec \ldots \prec l_s$ such that: $$ \sum_{i = 1}^sk_i = \lambda$$ and $$\sum_{i = 1}^s |k_i|l_i = \nu.$$ Here $l_i \prec l_{i+1}$ means that $|l_i| < |l_{i+1}|$ or, if $|l_i| = |l_{i+1}|$, then $l_i$ comes lexicographically before $l_{i+1}$.
\end{proposition}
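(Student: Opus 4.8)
\emph{Proof proposal.} I would prove the formula by induction on $|\nu|$, differentiating one coordinate at a time and tracking how the index set $p_s(\nu,\lambda)$ evolves. The base case $|\nu|=1$, say $\nu = e_i$, is just the ordinary chain rule $(f\circ g)^{(e_i)}(x) = \sum_{j=1}^d f^{(e_j)}(g(x))\, g_j^{(e_i)}(x)$, which matches the asserted identity because for $|\nu|=1$ the only admissible configuration has $s=1$, $l_1 = e_i$, $k_1 = e_j$, with coefficient $\nu!/(k_1!(l_1!)^{|k_1|}) = 1$.

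For the inductive step, fix $\nu$ with $1 \le |\nu| < r$ and a coordinate $i$; I want to pass from $\nu$ to $\nu + e_i$ (every multi-index of size $|\nu|+1$ arises this way, and the outcome is independent of the choice of $i$ by equality of mixed partials). Applying $\partial/\partial x_i$ to the right-hand side of the identity for $\nu$, the product rule splits every summand into two contributions: when $\partial/\partial x_i$ hits the outer factor $f^{(\lambda)}(g(x))$, the chain rule turns it into $\sum_{j=1}^d f^{(\lambda+e_j)}(g(x))\, g_j^{(e_i)}(x)$ times the unchanged inner product --- this is the family of configurations of $\nu + e_i$ in which the new index $i$ occupies a fresh singleton block with label $j$; when $\partial/\partial x_i$ hits the inner product $\prod_{j=1}^s (g^{(l_j)}(x))^{k_j}/(k_j!(l_j!)^{|k_j|})$, the Leibniz rule distributes it over the factors, replacing one copy of $g^{(l_j)}$ by $g^{(l_j+e_i)}$ --- this is the family of configurations in which $i$ is adjoined to an already-present block. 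Re-indexing and merging repeated configurations, I then have to check that the collected rational coefficients equal $(\nu+e_i)!$ over the appropriate product of $k'_j!$ and $(l'_j!)^{|k'_j|}$, and that the bound on $s$, the constraints $\sum k'_i = \lambda'$, $\sum |k'_i| l'_i = \nu + e_i$ and the ordering $0\prec l'_1\prec\dots\prec l'_{s'}$ are all preserved.

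The hard part is precisely this last reconciliation: after differentiation several distinct configurations $(k,l)\in p_s(\nu,\lambda)$ can collapse to the same configuration of $\nu+e_i$, and the weighted sum of their contributions must equal the prescribed coefficient. The cleanest way I know to organise it is to read a configuration $(k_1,\dots,k_s;l_1,\dots,l_s)$ as the combinatorial type of an unordered set partition of the ``index multiset'' of $\nu$ (the multiset with $\nu_p$ copies of each $p$) into nonempty blocks, together with a labelling of blocks by $\{1,\dots,d\}$ using each label $q$ exactly $\lambda_q$ times --- the $l_j$ record the block shapes, the $k_j$ how many blocks of each shape carry each label, and $\nu!/\prod_j k_j!(l_j!)^{|k_j|}$ is exactly the number of labelled partitions of that type. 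Under this dictionary, $\partial/\partial x_i$ corresponds to the elementary, weight-preserving move of inserting the index $i$ either as a new singleton block (with any label) or into one of the existing blocks, which reduces the coefficient identity to a transparent bijective count and closes the induction.

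As an alternative I would mention the generating-function proof: write the formal Taylor expansions $g(x+z) = g(x) + \sum_{|\mu|\ge 1} g^{(\mu)}(x)z^\mu/\mu!$ and $f(y+w) = \sum f^{(\lambda)}(y)w^\lambda/\lambda!$, substitute $w = g(x+z)-g(x)$, expand each power $w^\lambda = \prod_q(\,\sum_{|\mu|\ge 1} g_q^{(\mu)}(x)z^\mu/\mu!\,)^{\lambda_q}$ by the multinomial theorem, and extract the coefficient of $z^\nu$ in $(f\circ g)(x+z)$; the sums over $p_s(\nu,\lambda)$ then appear directly as the index set of the resulting multinomial expansion. This bypasses the induction but trades it for careful bookkeeping of the formal substitution, so I would present the inductive argument as the main proof and only sketch this second approach.
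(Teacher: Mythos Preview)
The paper does not prove this proposition at all: it is stated with a citation to \cite{faa} and used as a black box, with no accompanying proof in the text. So there is nothing to compare your approach against.

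That said, your proposal is a correct and standard route to the multivariate Fa\`a di Bruno formula. The induction on $|\nu|$ via one additional partial derivative, together with the set-partition interpretation of the coefficient $\nu!/\prod_j k_j!(l_j!)^{|k_j|}$, is exactly how one organises the bookkeeping so that the two types of terms (new singleton block versus enlarging an existing block) recombine to the right coefficient for $\nu+e_i$. The alternative generating-function argument you sketch is also standard and would work. Either would be acceptable as a proof; the paper simply chose to cite the result rather than reproduce one.
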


We will almost always use the case $d = e = m$, where $g$ will be a reparametrization of the domain of $f$, and $m$ is the dimension of the family that is parametrized. Finally, we will use the following norm. 

\begin{definition}[$C^r$-norm]\label{defcrnorm}
Let $r \in \N$ and $f: U \to \R$ be $C^r$. Then we define $C^r$-norm by
\[
|f|_r = \sup_{\substack{x \in U \\ \nu \in \N^m : |\nu| \leq r}} \frac{\left| f^{(\nu)}(x) \right|}{|\nu|!}.
\]
For a $C^r$ map $f: U \to \R^n$, we define $|f|_r$ to be the maximum of the $C^r$-norms of its component functions.
\end{definition}

\subsection{Mild functions}
Mild functions were introduced by Pila in \cite{milddef} in order to bound the number of rational points on a Pfaffian curve. We will use the definition of \cite{smoothpara}, where I derived a result on the composition of such functions by translating them to Gevrey functions (see \cite{gevrey}), and using the results that are known for these functions.

\begin{definition}[Mild functions]\label{defmild}
Let $r \in \N \cup \{+\infty\}$ and $A,B > 0$ and $C \geq 0$ be real numbers. A function $f: U \subseteq \R^m \to \R$, where $U$ is open, is $(A,B,C)$-mild up to order $r$ if it is $C^r$ on $U$ and if for any $\nu \in \N^m$ with $|\nu| \leq r$ and $x \in U$:
\[
\left| f^{(\nu)}(x) \right| \leq BA^{|\nu|} |\nu|!^{1+C}.
\]
A map $f: U \to \R^n$ is $(A,B,C)$-mild up to order $r$ if all of its component functions are. We may simply write $C$-mild up to order $r$, which means $(A,B,C)$-mild up to order $r$ for some $A,B>0$, or just mild up to order $r$, which means $(A,B,C)$-mild up to order $r$ for some $A,B > 0$ and $C \geq 0$. Also, if $r = +\infty$, we may often omit ``up to order $+\infty$''.
\end{definition}

Several examples will appear in the paper, but let us already mention that if $f: U \to \R$, where $U$ is a bounded subset of $\R^n$, is analytic on an open neighbourhood of the topological closure of $U$, then it is $0$-mild (see \cite[Proposition 2.2.10]{primer}). 

Let us now give some elementary properties of mild functions. The second and third result can be found in \cite{smoothpara}, the first one is clear. 

\begin{proposition}[{ \cite{smoothpara} }]\label{propmildop}
Suppose that $f,g: U \subseteq \R^m \to \R$ are $(A,B,C)$-mild up to order $r$.
\begin{enumerate}
\item The function $f+g$ is $(A,2B,C)$-mild up to order $r$.
\item The function $f\cdot g$ is $(2A,B^2,C)$-mild up to order $r$.
\end{enumerate}
Suppose $f: V \subseteq \R^m \to \R$ is $(A_f,B_f,C)$-mild up to order $r$ and $g: U \subseteq \R^m \to C$ is $(A_g,B_g,C)$-mild up to order $r$.
\begin{enumerate}[resume]
\item The composition $f \circ g$ is $(\tilde{A}, B_f,C)$-mild up to order $r$, with
\[
\tilde{A} = A_g(mB_gA_f+1)^{1+C}.
\]
\end{enumerate}
\end{proposition}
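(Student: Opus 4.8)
The three statements are purely about the growth of derivatives, so the whole proof is an exercise in bookkeeping with the $C^r$-norm-style bounds and, for part (3), with the Fa\`a di Bruno formula stated above. The plan is as follows.

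For part (1), I would simply observe that $(f+g)^{(\nu)} = f^{(\nu)} + g^{(\nu)}$, so for $|\nu| \le r$ the triangle inequality gives $|(f+g)^{(\nu)}(x)| \le |f^{(\nu)}(x)| + |g^{(\nu)}(x)| \le 2BA^{|\nu|}|\nu|!^{1+C}$, which is exactly the claimed $(A,2B,C)$-mildness.

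For part (2), I would apply the Leibniz rule: $(f\cdot g)^{(\nu)} = \sum_{\mu \le \nu} \binom{\nu}{\mu} f^{(\mu)} g^{(\nu-\mu)}$. Bounding each factor by the mildness hypothesis yields
\[
\left| (f\cdot g)^{(\nu)}(x) \right| \le \sum_{\mu \le \nu} \binom{\nu}{\mu} B A^{|\mu|} |\mu|!^{1+C} \cdot B A^{|\nu-\mu|} |\nu-\mu|!^{1+C} = B^2 A^{|\nu|} \sum_{\mu \le \nu} \binom{\nu}{\mu} |\mu|!^{1+C} |\nu-\mu|!^{1+C}.
\]
The point is then the elementary inequality $|\mu|!\,|\nu-\mu|! \le |\nu|!$ together with $\sum_{\mu\le\nu}\binom{\nu}{\mu} = 2^{|\nu|}$, which bounds the sum by $2^{|\nu|}|\nu|!^{1+C}$ and gives $(2A,B^2,C)$-mildness. (One can absorb the extra $|\nu|!^{C}$ harmlessly since $|\mu|!^C|\nu-\mu|!^C \le |\nu|!^C$.) I would spell out the inductive or multinomial justification of $|\mu|!\,|\nu-\mu|!\le |\nu|!$ only briefly.

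Part (3) is the substantive one and is essentially a restatement of the composition result for Gevrey functions; the cleanest route is to feed the mildness bounds directly into the Fa\`a di Bruno formula of Proposition~\ref{faa}. Writing $h = f\circ g$ and estimating $|f^{(\lambda)}(g(x))| \le B_f A_f^{|\lambda|}|\lambda|!^{1+C}$ and $|g^{(l_j)}(x)^{k_j}| \le (B_g A_g^{|l_j|} |l_j|!^{1+C})^{|k_j|}$, one collects the powers of $A_g$ (which sum to $|\nu|$ by the constraint $\sum|k_i|l_i=\nu$) and the powers of $B_g$ and $B_f$, and then has to dominate the remaining combinatorial sum — over $\lambda$, over $s$, and over the partitions $p_s(\nu,\lambda)$ — by $(m B_g A_f + 1)^{(1+C)|\nu|}$ times $|\nu|!^{1+C}$. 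This is exactly the kind of Gevrey-type composition bound established in \cite{smoothpara} and \cite{gevrey}; the role of the factor $(mB_gA_f+1)^{1+C}$ inside $\tilde A$ is to have enough room, when raised to the $|\nu|$-th power, to absorb the number of terms in $p_s(\nu,\lambda)$ and the factorial mismatch coming from the exponent $1+C$ instead of $1$. The main obstacle, and the only place where real care is needed, is precisely this last combinatorial estimate: showing that $\sum_s \sum_{p_s(\nu,\lambda)} \nu! \prod_j \frac{1}{k_j!(l_j!)^{|k_j|}} \cdot (\text{factorial factors})$ is bounded by the claimed expression, uniformly in $\nu$ with $|\nu|\le r$. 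Rather than redo this from scratch I would cite the corresponding statement in \cite{smoothpara} (which is where this proposition is attributed) and indicate how the stated constant $\tilde A = A_g(mB_gA_f+1)^{1+C}$ drops out, leaving $B_f$ as the new $B$-constant because the single outer factor $f^{(\lambda)}$ contributes exactly one factor of $B_f$ and the $B_g$'s get swept into $\tilde A$.
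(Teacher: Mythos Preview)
The paper does not actually prove this proposition: it states that part (1) is clear and refers to \cite{smoothpara} for parts (2) and (3), and later remarks (after Lemma~\ref{lemmaAB}) that the combinatorial identity there ``implies the result on compositions of mild functions in Proposition~\ref{propmildop}.'' Your sketch is correct and entirely in line with this treatment --- the Leibniz argument for (2) is the standard one, and for (3) you, like the paper, ultimately defer the Fa\`a di Bruno combinatorial estimate to \cite{smoothpara}/\cite{gevrey}; note that Lemma~\ref{lemmaAB} is exactly the $C=0$ instance of the bound you need.
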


\begin{remark}
If $f$ is $(A_f,B_f,C_f)$-mild up to order $r_f$ and $g$ is $(A_g,B_g,C_g)$-mild up to order $r_g$, then one can first take the maximum of the mildness parameters and take the minimum of $r_f$ and $r_g$ in order to apply the above proposition.
\end{remark}

\subsection{Main results}
Our main results hold for power-subanalytic families of subsets of $(0,1)^n$. We define the notion power-subanalytic in the next subsection. We will denote a family of such subsets by $X = \{X_t \subseteq (0,1)^n \mid t \in T\}$, where $T \subseteq \R^k$ is some (power-subanalytic) set of parameters. The dimension of $X$ is the maximum of the dimensions of its family members. We will work with (power-subanalytic) families of maps $f: U \subseteq T \times (0,1)^m \to \R^n$ and for any $t \in T$, denote $f_t$ for the map $U_t \to \R^n$ defined by $f_t(x) = f(t,x)$, where $U_t = \{ x \in (0,1)^m \mid (t,x) \in U\}$.

The following parametrization theorem is the first main result of this paper. It further refines the main result of \cite{smoothpara}, which made the polynomial dependence on $r$, obtained by Cluckers, Pila and Wilkie in \cite{unif}, more explicit. More precisely, we improve the degree of $r$ from $m^3$ to $m$. This is the same number of charts as in \cite{ccs} for families of subanalytic sets.

\begin{theorem*}[Theorem \ref{thmcrpara}]
Let $X$ be a power-subanalytic family of subsets of $(0,1)^n$ of dimension $m$. Then there exist constants $c = c(X) \in \N$ and $A = A(X) \in \N$ with the following property. For all $r \in \N$, there exist power-subanalytic maps $f_l: T \times (0,1)^m \to X$, for $1 \leq l \leq c$, such that for any $t \in T$, we have that
\[
\bigcup_{l=1}^c \im(f_{l,t}) = X_t,
\]
and for each $l \in \{1,\ldots,c\}$, the map $f_{l,t}$ is $(Ar,1,0)$-mild up to order $r$.
\end{theorem*}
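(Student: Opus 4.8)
The plan is to reduce the theorem to the Pre-parametrization Theorem (the result highlighted in the introduction as the foundation of the proofs), and then perform a single reparametrization of the unit cube that simultaneously controls all derivatives up to order $r$ with the claimed $(Ar,1,0)$-mild bound. First I would invoke the Pre-parametrization Theorem to obtain, for the family $X$, a finite collection of power-subanalytic maps $g_j : T \times (0,1)^m \to X$ whose images cover each $X_t$ and which satisfy a derivative bound of the form $|g_{j,t}^{(\nu)}(x)| \le A_0^{|\nu|}|\nu|!$ for $|\nu| \le r_0$, where $r_0$ is a fixed order (say $r_0 = 1$ or some small fixed integer coming from the cell decomposition), uniformly in $t$; the number of these maps is a constant $c_0 = c_0(X)$ depending only on $X$. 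The point is that the pre-parametrization only controls derivatives up to a \emph{bounded} order, not up to arbitrary $r$, so the work is to boost this to order $r$ at the cost of making the $A$-parameter grow linearly in $r$.

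The boosting step is the substitution $x \mapsto x^{(r)}$ coordinatewise, i.e. composing $g_{j,t}$ with the monomial reparametrization $\phi_r : (0,1)^m \to (0,1)^m$, $\phi_r(y) = (y_1^r,\ldots,y_m^r)$ (more precisely one uses the reparametrization adapted so that the pre-parametrization's derivative bound on $g_{j,t}$ is converted into Gevrey-type control). I would compute the derivatives of $f_{l,t} := g_{j,t} \circ \phi_r$ via the Fa\`a di Bruno formula (Proposition \ref{faa}): each factor $(\phi_r^{(l_i)})^{k_i}$ contributes binomial-type constants $r^{|k_i|}$ and factorial factors, while the combinatorial sums $\sum_{s}\sum_{p_s(\nu,\lambda)}$ together with the $\nu!/\bigl(k_j!(l_j!)^{|k_j|}\bigr)$ weights are exactly what is needed to absorb the pre-parametrization bound into a clean expression. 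The outcome should be a bound $|f_{l,t}^{(\nu)}(x)| \le (Ar)^{|\nu|}$ for all $|\nu| \le r$ and all $t$, with $A = A(X)$ independent of $r$, which is precisely $(Ar,1,0)$-mildness up to order $r$. This is essentially the argument of \cite[Proposition 4.1.5]{unif} as reworked in Section \ref{secinterm}, and I would cite that modified proof; the number of maps $c$ is still $c_0$, since $\phi_r$ is a single reparametrization and does not increase the count.

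The main obstacle, and the place where this paper improves on \cite{smoothpara}, is controlling the dependence of $A$ on the \emph{power} exponents appearing in the power-subanalytic data: a naive iteration of cell decomposition and the reparametrization lemma forces nested applications of Fa\`a di Bruno, and each nesting multiplies the effective $A$ by a factor involving $r$, producing the $r^{m^3}$ blow-up of \cite{smoothpara}. To get degree exactly $m$ I expect one must (i) arrange the pre-parametrization so that only one reparametrization of each of the $m$ variables is needed, handled in a single pass rather than variable-by-variable iteration, and (ii) track the constants $\tilde A$ in Proposition \ref{propmildop}(3) carefully so that the composition $g_{j,t}\circ\phi_r$ incurs only a single factor of $r$ per unit of $|\nu|$. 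The delicate point will be that the power-subanalytic functions are not polynomial, so their pure analytic part (which is $0$-mild by the remark after Definition \ref{defmild}) must be separated from the monomial/power part, and only the latter should interact with $\phi_r$; keeping these two contributions from compounding is where the careful bookkeeping lies. Once the order-$r$ mild bound is established uniformly in $t$, a final linear rescaling of $(0,1)^m$ (as in the passage to Corollary \ref{corcrpara}) is not even needed here, since the statement is already phrased in terms of mildness rather than $C^r$-norms.
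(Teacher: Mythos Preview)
Your proposal has a genuine gap at the first step, and it propagates through the whole argument. The Pre-parametrization Theorem does \emph{not} produce maps $g_j: T\times(0,1)^m \to X$; it produces maps $f_l:\cell_l\to(0,1)^n$ whose domains are \emph{cells} $\cell_l\subset T_l\times(0,1)^m$ with walls $\alpha_i,\beta_i$ that are themselves prepared in $x$. Composing such an $f_l$ with the pure power map $P_r(y)=(y_1^r,\ldots,y_m^r)$ only gives a map on $P_r^{-1}(\cell_l)$, which is still a cell with nontrivial walls, not $(0,1)^m$. Reparametrizing that leftover domain by the obvious wall map is exactly what forces the nested Fa\`a di Bruno applications you mention, and is precisely the source of the $r^{m^3}$ loss in \cite{smoothpara}; your third paragraph correctly identifies the obstacle but does not propose a mechanism to overcome it.

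The paper's fix is concrete and different from anything in your sketch: instead of $P_r$, one uses the single map $\phi^r:(0,1)^m\to\cell$ built as $\phi^{r,m}\circ\cdots\circ\phi^{r,1}$ with $\phi^{r,i}$ sending $x_i\mapsto \alpha_i+(\beta_i-\alpha_i)x_i^r$, so the power substitution and the wall reparametrization are fused. The entire computation is then a \emph{single} Fa\`a di Bruno application to $f\circ\phi^r$, but this requires two new lemmas about $\phi^r$ itself (Lemmas \ref{lemmaweaklymild} and \ref{factorxr}): a bound on $|(\phi^r_\ell)^{(\nu)}/\phi^r_\ell|$ of the shape $x^{-\nu}B_\cell(A_\cell r)^{|\nu|}|\nu|!$, and, crucially, the statement that if $\nu_I\neq 0$ then $|(\phi^r_\ell)^{(\nu)}|$ carries an extra factor $x_I^r$. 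In the main estimate one chooses indices $I,J$ from the Fa\`a di Bruno data (with $x_I=\min\{x_i:\nu_i\neq0\}$ and $J$ such that $k_{j,J}l_{j,I}\neq0$), uses the $C^1$-bound on $b$ to extract $\phi^r_J(x)$ in the numerator, and then spends that factor via Lemma \ref{factorxr} to produce $x_I^r$, which exactly cancels the accumulated $x^{-\nu}$ since $|\nu|\le r$. This cancellation device is the missing idea; without it the plan you describe reproduces only Proposition \ref{proppowersub} and does not reach the theorem.
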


Using a linear reparametrization, one obtains the following $C^r$-parametrization theorem of $X$.

\begin{corollary}[$C^r$-parametrization]\label{corcrpara}
Let $X$ be a power-subanalytic family of subsets of $(0,1)^n$ of dimension $m$. Then there exists a constant $c = c(X) \in \N$ with the following property. For all $r \in \N$, there exist power-subanalytic maps $f_l: T \times (0,1)^m \to X$, for $1 \leq l \leq cr^m$, such that for any $t \in T$, we have that
\[
\bigcup_{l = 1}^{cr^m} \im(f_{l,t}) = X_t,
\]
and for each $l \in \{1,\ldots,cr^m\}$, $|f_{l,t}|_r \leq 1$.
\end{corollary}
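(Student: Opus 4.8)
The plan is to deduce this from Theorem \ref{thmcrpara} by a linear subdivision of the domain cube. Let $c = c(X)$ and $A = A(X) \in \N$ be the constants and $f_1, \ldots, f_c \colon T \times (0,1)^m \to X$ the power-subanalytic maps provided by Theorem \ref{thmcrpara}, so that for each $t$ and each $l$ the map $f_{l,t}$ is $(Ar,1,0)$-mild up to order $r$; that is, $|f_{l,t}^{(\nu)}(x)| \le (Ar)^{|\nu|}\,|\nu|!$ for all $\nu$ with $|\nu|\le r$ and all $x \in (0,1)^m$. The underlying idea is that precomposing $f_{l,t}$ with the affine map rescaling $(0,1)^m$ onto a subcube of side $1/(Ar)$ divides the $\nu$-th derivative by exactly $(Ar)^{|\nu|}$, which cancels the mildness estimate down to $|\nu|!$ and hence produces $C^r$-norm at most $1$ in the sense of Definition \ref{defcrnorm}.

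First I would cover $(0,1)^m$ by open subcubes of side $1/(Ar)$ that are contained in $(0,1)^m$. Since $A$ and $r$ are positive integers, $Ar$ is a positive integer, and one checks that the $(2Ar-1)^m$ cubes
\[
Q_{\mathbf k} \;=\; \prod_{i=1}^m \Bigl( \tfrac{k_i}{2Ar},\; \tfrac{k_i}{2Ar} + \tfrac{1}{Ar}\Bigr), \qquad \mathbf k \in \{0,\ldots,2Ar-2\}^m,
\]
lie in $(0,1)^m$ and cover it (consecutive intervals in each coordinate overlap, so nothing is lost along the grid hyperplanes). For each $\mathbf k$ let $g_{\mathbf k}\colon (0,1)^m \to Q_{\mathbf k}$ be the affine bijection $g_{\mathbf k}(y) = \bigl(\tfrac{k_1}{2Ar},\ldots,\tfrac{k_m}{2Ar}\bigr) + \tfrac{1}{Ar}\,y$, whose linear part is $\tfrac{1}{Ar}$ times the identity, and set $h_{l,\mathbf k}(t,y) = f_l\bigl(t, g_{\mathbf k}(y)\bigr)$; this is again a power-subanalytic map $T \times (0,1)^m \to X$, being a composition of power-subanalytic maps. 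There are $c\,(2Ar-1)^m \le c\,(2A)^m r^m$ of these, so after relabelling (and, if one wants exactly $c'r^m$ of them, repeating some) they form a list $f'_1, \ldots, f'_{c' r^m}$ with $c' := c\,(2A)^m$.

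It remains to check the two conclusions. Since $g_{\mathbf k}\bigl((0,1)^m\bigr) = Q_{\mathbf k}$ and $\bigcup_{\mathbf k} Q_{\mathbf k} = (0,1)^m$, we get for every $t$ that $\bigcup_{l,\mathbf k} \im(h_{l,\mathbf k,t}) = \bigcup_l f_{l,t}\bigl(\bigcup_{\mathbf k} Q_{\mathbf k}\bigr) = \bigcup_l \im(f_{l,t}) = X_t$. For the norm bound, because $g_{\mathbf k}$ is affine with linear part $\tfrac{1}{Ar}\,\mathrm{Id}$, a one-line induction on $|\nu|$ (the affine special case of the Fa\`a di Bruno formula, Proposition \ref{faa}) gives, for every component $h$ of $h_{l,\mathbf k, t}$, every $y \in (0,1)^m$ and every $\nu$ with $|\nu| \le r$,
\[
h^{(\nu)}(y) \;=\; (Ar)^{-|\nu|}\, f^{(\nu)}\bigl(g_{\mathbf k}(y)\bigr),
\]
where $f$ is the corresponding component of $f_{l,t}$. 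Combining with the mildness estimate above yields $|h^{(\nu)}(y)| \le (Ar)^{-|\nu|}(Ar)^{|\nu|}|\nu|! = |\nu|!$, so $|h|_r \le 1$ and therefore $|h_{l,\mathbf k,t}|_r \le 1$.

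There is no real obstacle in this argument; the two points that deserve a word of care are that one must use slightly overlapping open subcubes rather than a tiling, so that $(0,1)^m$ is genuinely covered rather than covered only up to the grid hyperplanes, and that the rescaling lands exactly on the bound $1$ only because Theorem \ref{thmcrpara} provides $(Ar,1,0)$-mildness with the crucial parameter $C=0$, so that the factor $|\nu|!^{1+C}$ in Definition \ref{defmild} is just $|\nu|!$.
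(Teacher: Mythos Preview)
Your proof is correct and follows exactly the approach the paper intends: the corollary is left as an exercise with the hint ``linear reparametrization of $(0,1)^m$'', and you carry this out carefully by subdividing $(0,1)^m$ into overlapping open subcubes of side $1/(Ar)$ so that the chain rule cancels the $(Ar)^{|\nu|}$ factor from Theorem~\ref{thmcrpara} against the rescaling. Your attention to using overlapping open cubes (rather than a closed tiling) so that the cover is genuine is a nice touch.
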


\begin{remark}\label{remamountcharts}
The norm plays a role in the number of maps in the corollary above. If one would use that $C^r$ supremum norm, defined by
\[
\sup_{\substack{x \in U \\ \nu \in \N^m : |\nu| \leq r}} \left| f^{(\nu)}(x) \right|,
\]
then the number of maps obtained by applying a linear reparametrization to the maps obtained from Theorem \ref{thmcrpara} is $cr^{2m}$ for some $c>0$ (the same $c$ as in the corollary).
\end{remark}

It is encouraged to do either Corollary \ref{corcrpara} or Remark \ref{remamountcharts} as an exercise.

\begin{remark}
In \cite{ccs} one also uses the $C^r$-norm defined in Definition \ref{defcrnorm}, and obtain $cr^m$ $C^r$-charts for subanalytic sets. In \cite{unif}, one uses the $C^r$ supremum norm defined above.
\end{remark}

The second main result of this paper is a $C$-mild parametrization theorem, for $C>1$, that follows from earlier work in \cite{mildpara}. If one picks $C = 2$, then one obtains the uniform $(A,1,2)$-mild parametrization result for subanalytic families of \cite{ccs}. However, if one restrict to the case of curves, then it is possible to take $C>0$. In higher dimensions, I do not know whether this sharper bound would follow from the methods in this paper.

\begin{theorem*}[Mild parametrization, Theorem \ref{thmmildpara}]
Let $X$ be a power-subanalytic family of subsets of $(0,1)^n$ of dimension $m$. Then there exist constants $c = c(X) \in \N$ and $A = A(X) \in \N$ with the following property. For all $C>1$, there exist maps $f_l: T \times (0,1)^m \to X$, for $1 \leq l \leq c$, such that for any $t \in T$, we have
\[
\bigcup_{l=1}^c \im(f_{l,t}) = X_t,
\]
and for each $l \in \{1,\ldots,c\}$, the map $f_{l,t}$ is $(A/C,1,C)$-mild up to order $+\infty$.
\end{theorem*}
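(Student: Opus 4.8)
\emph{Proof strategy.} The argument rests on the Pre-parametrization Theorem and on the one-variable case treated in \cite{mildpara}. First I would apply the Pre-parametrization Theorem to the family $X$: this yields a number $c = c(X) \in \N$ of power-subanalytic maps $\phi_l : T \times (0,1)^m \to X$, $1 \le l \le c$, with $\bigcup_l \im(\phi_{l,t}) = X_t$ for every $t$, each $\phi_{l,t}$ being analytic on $(0,1)^m$ and ``prepared'' in the sense that all of its partial derivatives are controlled by the bounds that theorem provides, uniformly in $t$. It therefore suffices to produce, for a given $C > 1$, a single reparametrization $\sigma = \sigma_C : (0,1)^m \to (0,1)^m$, depending only on $C$ and on $X$, such that the composition $\phi_{l,t} \circ \sigma$ is $(A/C, 1, C)$-mild up to order $+\infty$ for a constant $A = A(X) \in \N$ independent of $C$, $l$ and $t$; the sought parametrization is then $\{\phi_l \circ \sigma \mid 1 \le l \le c\}$, and the number of charts has not changed. (Unlike in Theorem \ref{thmcrpara}, the maps here are not required to be power-subanalytic, which is what lets $\sigma$ be chosen non-definable.)

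For the reparametrization I would take $\sigma$ of product form $\sigma(y) = (\tau(y_1), \ldots, \tau(y_m))$, where $\tau : (0,1) \to (0,1)$ is the one-variable reparametrization from \cite{mildpara}, chosen superpolynomially flat at the two endpoints with a Gevrey exponent tuned to $C$; in particular $\tau$ is itself mild (of $C$-mild type) on $(0,1)$, and its flatness at the boundary is what compensates the growth of the prepared maps near $\partial(0,1)^m$. The verification that $\phi_{l,t} \circ \sigma$ is $(A/C,1,C)$-mild up to order $+\infty$ is then a computation with the Fa\`a di Bruno formula (Proposition \ref{faa}), applied in the case $d = e = m$ with outer function a component of $\phi_{l,t}$ and inner function $\sigma$: one separates the ``bad'' coordinate directions from the rest, uses that the negative powers of $\tau$ appearing through the outer derivatives cancel against the positive powers of $\tau$ produced by the inner derivatives, and estimates the remaining combinatorial sums together with the composition bound of Proposition \ref{propmildop}(3) (or a refinement of it), keeping in mind Definition \ref{defmild}. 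All constants produced in this way are controlled in terms of $X$ alone, which gives the uniformity in $t$, and since $\sigma$ does not depend on $l$ or $t$, the number of charts stays equal to $c(X)$.

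The heart of the matter, and the step I expect to be the main obstacle, is the multivariate derivative estimate in the previous paragraph. In one variable (the situation of \cite{mildpara}) the flat map $\tau$ can be arranged so that this cancellation is essentially lossless and one obtains $C$-mildness for every $C > 0$. In $m \ge 2$ variables the Fa\`a di Bruno expansion mixes derivatives taken in different coordinate directions, and bounding the resulting products of mixed partials costs an extra unit in the Gevrey exponent; this is exactly why the construction only yields $(A/C,1,C)$-mildness for $C > 1$, and why it is unclear (cf.\ the discussion in Section \ref{secdiscuss}) whether $C > 0$ is attainable in higher dimension. A secondary point is to carry out the reduction to prepared form uniformly in $t$ and to extract the constant $A$ independently of $C$; once this is done, specializing $C = 2$ recovers, a fortiori, the uniform $(A,1,2)$-mild parametrization of subanalytic families of \cite{ccs}.
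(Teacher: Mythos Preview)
Your outline has a genuine gap at the very first step. The Pre-parametrization Theorem (Theorem \ref{prepara}) does \emph{not} hand you maps $\phi_l : T \times (0,1)^m \to X$; it hands you maps $f_l : \cell_l \to (0,1)^n$ whose domains $\cell_{l,t}$ are open cells in $(0,1)^m$ with walls $\alpha_i(t,x_1,\ldots,x_{i-1}) < x_i < \beta_i(t,x_1,\ldots,x_{i-1})$ that are themselves prepared in $x$. A product reparametrization $\sigma(y) = (\tau(y_1),\ldots,\tau(y_m))$ maps $(0,1)^m$ onto $(0,1)^m$, not onto such a cell, so the composition $f_{l,t}\circ\sigma$ is not even defined on all of $(0,1)^m$ and certainly does not cover $X_t$. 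Mapping $(0,1)^m$ onto $\cell_{l,t}$ while keeping the composition mild is precisely the content of the proof, and it cannot be done by a product map: the $i$-th component of the paper's map $\phinf$ reads $\alpha_i(\ldots) + (\beta_i-\alpha_i)(\ldots)\,e^{1-1/x_i^\kappa}$, with the walls evaluated at the already-reparametrized previous coordinates. This forces the inductive structure (Lemmas \ref{lemmaweaklymild2} and \ref{factorexp}) that your sketch omits; the mildness of the walls and the bounded-$C^1$-norm condition on the associated monomial maps are what make that induction go through.

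Secondly, your diagnosis of why one only gets $C>1$ is not quite right. It is not the mixing of coordinate directions in Fa\`a di Bruno per se: even when $\cell=(0,1)^m$ and $\phinf$ \emph{is} the product map, the same computation gives the extra factor $e^{1-1/x_I^\kappa}x_I^{-(\kappa+1)|\nu|} \le e\big(\tfrac{\kappa+1}{\kappa}\big)^{(\kappa+1)|\nu|/\kappa}|\nu|!^{1+1/\kappa}$, hence $C=1+1/\kappa>1$. The sharper $C>0$ for curves in \cite{mildpara} comes from a different device (reversing the order of the linear rescaling onto $\cell$ and the exponential substitution), not from the absence of mixed partials; see the discussion in Section \ref{secdiscuss}.
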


Using a linear reparametrization, one could also derive a $C^r$-parametrization theorem from this result. If $C = 1$, the number of maps obtained is, up to some constant, the same as the number obtained in Corollary \ref{corcrpara}, so we reach this ``asymptotically''.

Both results are deduced from a Pre-parametrization Theorem, Theorem \ref{prepara}, which we will formulate below. This theorem allows us to uniformly parametrize $X$ with finitely many maps $f$ of a simple form. One applies a power substitution to these maps, roughly speaking, raise each variable to the power $r$, to obtain Theorem \ref{thmcrpara}. Applying an exponential substitution to these maps yields Theorem \ref{thmmildpara}. 

\subsection{Results and terminology of o-minimality} \label{secomin}
A subset of $\R^n$ or a function is \textbf{power-subanalytic} if it is definable in the o-minimal structure $\RKF$, where $\F$ is a Weierstrass system, and $K$ a subfield of the field of exponents of $\F$. See \cite{preparation} for a precise definition of these structures and of a Weierstrass system. It is not important for our results to precisely understand these structures. Let us mention that for the smallest Weierstrass system $\F$ and $K = \Q$, this is the structure of real semi-algebraic sets, and for the largest choice of $\F$ and $K \subseteq \R$, the collection of all \emph{restricted} analytic functions, this is the expansion of the structure $\Ran$ of globally subanalytic sets by power functions $x \mapsto x^r$ for $x>0$ and $r \in K$.

The domain of the functions we will encounter will be open \textbf{cells} in $(0,1)^m$. An \emph{open} cell $\cell$ in $(0,1)^m$ is a set defined by inequalities of the form
\[
\alpha_i(x_1,\ldots,x_{i-1}) < x_i < \beta_i(x_1,\ldots,x_{i-1}),
\]
for some continuous power-subanalytic functions $\alpha_i,\beta_i$ with $\alpha_i < \beta_i$ for $i = 1,\ldots,m$. The functions $\alpha_i$ and $\beta_i$ are called the walls of the variable $x_i$ and the walls of all variables are also called the walls of $\cell$. By o-minimality, every power-subanalytic subset of $\R^n$ is a finite union of cells. Note that some of these cells might not be open (in that case one has an equality for some variable(s), instead of inequalities).

This is all terminology of o-minimality that is required to understand this paper. Theorem \ref{prepara} below heavily relies on this theory, but we will just use it as a black box. First we need two more definitions.

\begin{definition}[Bounded monomial] \label{defbm}
A function $b: U \subseteq T \times (0,1)^m \to \R$ is called a bounded monomial map if it has bounded range, and is of the form
\[
b(t,x) = a(t)x^\mu
\]
for some power-subanalytic function $a$ and $\mu \in \R^m$. A map $b: U \subseteq T \times (0,1)^m \to \R^n$ is bounded monomial if each of its component functions are.
\end{definition}

\begin{remark} \label{remdefbm}
Let $b: U \subseteq T \times (0,1)^m \to \R^n$ be a bounded monomial map. It follows from the definition that for every $t \in T$, the function $b_t: U_t \to \R$ is bounded. Moreover, this bound may be taken independent from $t$.
\end{remark}

\begin{definition}[Prepared in $x$] \label{defprepx}
A function $f: U \subseteq T \times (0,1)^m \to \R$ is prepared in $x$ if there exists a bounded monomial map $b: U \to \R^N$ $(N \in \N)$, and an analytic function $F$ that is non-vanishing on an open neighbourhood of the topological closure of $\im(b)$, such that
\[
f(t,x) = b_j(t,x)F(b(t,x)),
\]
where $b_j$ is a component function of the map $b$. The map $b$ is called the associated bounded monomial map of $f$.
\end{definition}

\begin{remark}\label{remdefprepx}
Note that it follows from the definition that the map $F$, often called the unit, is $(A_F,B_F,0)$-mild for some $A_F,B_F$ that depend on $F$ only. In particular, $A_F$ and $B_F$ do not depend on $t \in T$.
\end{remark}

We now state the Pre-parametrization Theorem that is the key result in the proofs of Theorem \ref{thmcrpara} and Theorem \ref{thmmildpara}. It is the version from \cite{smoothpara}, but it is originally due to Cluckers, Pila and Wilkie in \cite{unif}.

\begin{theorem}[Pre-parametrization, {\cite[Theorem 3.11]{smoothpara}}]\label{prepara}
Let $T \subseteq \R^k$ and $X = \{X_t \mid t \in T\}$ be a power-subanalytic family of subsets of $(0,1)^n$ of dimension $m$. Then there exist finitely many power-subanalytic families of maps $f_l: \cell_l \to (0,1)^n$ such that:
\begin{enumerate}
\item for each $t \in T$: $\bigcup_{l} \im(f_{l,t}) = X_t$;
\item for each $l$, $\cell_l$ is a cell contained in $T_l \times (0,1)^m$, where $T_l$ is a cell contained in $T$, and for each $t \in T$, $\cell_{l,t}$ is an open cell in $(0,1)^m$;
\item for each $l$, $f_l$ is prepared in $x$ and for its associated bounded monomial map $b_l$, there is a $B>0$ such that for any $t \in T$, the $C^1$-norm of $b_{l,t}$ is at most $B$;
\item for each $l$, the walls of $\cell_l$ bounding the variables $x_1,\ldots,x_m$ (extended trivially to functions in $(t,x)$) are of the same form as $f_l$, as defined in property 3.
\end{enumerate}
\end{theorem}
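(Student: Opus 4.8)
The plan is to derive the statement from o-minimal cell decomposition in $\RKF$ together with the preparation theorem for power-subanalytic functions (as in \cite{preparation}), by induction on the number $m$ of free variables. I would first reduce to parametrizing graphs: applying cell decomposition to the family $X$ and permuting coordinates, it suffices to parametrize a family that is the graph of a power-subanalytic family of maps $\phi\colon D \to (0,1)^{n-d}$ over an open-cell family $D \subseteq T\times(0,1)^d$ with $d\le m$, and adjoining trivial $(0,1)$-factors we may take $d=m$. The tautological parametrization $x\mapsto(x,\phi(t,x))$ (with coordinates suitably interleaved) has coordinate functions that are either some $x_j$ — trivially prepared in $x$, with bounded monomial $b_1=x_j$ and unit $F\equiv 1$ — or a component of $\phi$. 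So the task becomes: given a finite tuple of power-subanalytic functions on an open cell $\cell\subseteq T\times(0,1)^m$, cover $\cell$ by finitely many cells $\cell_l$ with prepared walls, together with power-subanalytic maps $g_l\colon\cell_l\to\cell$ whose components are prepared in $x$, such that each function in the tuple, pulled back by $g_l$, is prepared in $x$. This is what I would prove by induction on $m$, keeping the whole tuple in play so that the preparation below can be applied to all its members at once.

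For the inductive step, I would invoke the preparation theorem in the last variable $x_m$, simultaneously for all functions in the tuple and for the walls $\alpha_m,\beta_m$ of $\cell$: after a common refinement of $\cell$ into cells, each $\phi^{(i)}$ acquires the shape $c_i(t,x_{<m})\,(x_m-\theta(t,x_{<m}))^{q_i}\,U_i$ of a coefficient times a power of $x_m-\theta$ times an analytic unit $U_i$ in bounded powers of $(x_m-\theta)/\psi$ and of $\psi^{\pm1}$-type terms, with a common center $\theta$ and scale $\psi$ depending only on $x_{<m}$. I would then reparametrize $x_m$ — splitting according to the sign of $x_m-\theta$ and the position of $\theta$ relative to the walls, and substituting $x_m=\theta\pm\psi\,x_m'$ or an analogous wall-based substitution — so that $(x_m-\theta)/\psi$ becomes a bounded monomial in the new variable $x_m'$; the coefficients $c_i$, the center $\theta$, the scale $\psi$, and the new walls in $x_m'$ are then power-subanalytic functions of $x_{<m}$ only, which I adjoin to the tuple and run through the inductive hypothesis on the $(m-1)$-variable base of $\cell$. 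Composing the $(m-1)$-variable reparametrization with the $x_m$-substitution yields $g_l$ and $\cell_l$, and one checks that ``prepared in $x$'' survives this composition — the crux being that substituting bounded monomials into bounded monomials again gives bounded monomials, and that compositions of analytic units of bounded monomials stay of that form, in the spirit of Proposition~\ref{propmildop}(3). A final common power substitution $x_j\mapsto x_j^{N}$ in all variables, for a large integer $N$, makes every monomial exponent that occurs either $0$ or at least $1$, which secures the uniform $C^1$-norm bounds of properties~(3) and~(4) in $t$: the exponents are fixed combinatorial data, the coefficients depend on $t$ but not on $x$, and the ranges are uniformly bounded by Remark~\ref{remdefbm}. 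The base case $m=0$ is immediate, since a bounded power-subanalytic function of $t$ equals $b_1\cdot F(b)$ with $b=b_1$ that function and $F\equiv1$.

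The step I expect to be the main obstacle is exactly this interaction between preparation and cell structure in the inductive step. The preparation theorem inevitably produces the auxiliary functions $\theta$ and $\psi$ of the lower variables, and one must choose the $x_m$-reparametrization so that, all at once, (i) its image is precisely the cell one wants, (ii) each pulled-back function becomes genuinely prepared in $x$ rather than merely prepared up to a unit depending on $x_{<m}$, (iii) the new lower-variable data is of the form the induction can digest and the resulting walls satisfy property~(4), and (iv) the uniform $C^1$-estimates persist. Arranging all four simultaneously, with honest bookkeeping of how many cells are generated, is where essentially all the work lies; the o-minimal inputs — uniform cell decomposition, definable selection of the finitely many cells — and the composition-closure lemmas for prepared functions are comparatively routine.
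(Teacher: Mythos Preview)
The paper does not prove Theorem~\ref{prepara}; it is quoted from \cite{smoothpara} (and ultimately from \cite{unif}) and, as the text says just before its statement, is used ``as a black box''. So there is no in-paper proof to compare your proposal against.

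That said, your sketch is essentially the architecture of the proof in \cite{unif} and \cite{smoothpara}: reduce to graphs by cell decomposition, apply the preparation theorem of \cite{preparation} in the top variable $x_m$ simultaneously to the coordinate functions and the walls, recentre/reparametrize $x_m$ so that the prepared expression becomes a bounded monomial in the new variable times a unit in bounded monomials, adjoin the lower-variable coefficients, center and scale to the tuple and run the induction on $m-1$ variables, and finish with a power substitution to secure the $C^1$-bound. Two caveats worth flagging. First, in the cited proofs the $x_m$-move is a translation $x_m\mapsto x_m-\theta$ (so the center becomes $0$ and the cell walls change accordingly), not the rescaled substitution $x_m=\theta\pm\psi x_m'$ you propose; either can be made to work, but the translation interacts more cleanly with the definition of ``bounded monomial'' used here (no center) and with property~(4). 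Second, your account of the final power substitution is a bit optimistic: $x_j\mapsto x_j^N$ only pushes exponents in $(0,1)$ above~$1$, it does nothing for negative exponents, so one must first organise the recentering and cell refinement so that on each piece the relevant monomial exponents are nonnegative --- this is part of the ``honest bookkeeping'' you already flag as the main obstacle, but it is not automatic and is where most of the care in \cite{unif,smoothpara} is spent.
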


\begin{remark}
Note that $(3)$ is equivalent to stating that the associated bounded monomial map $b$, of some map $f$ that is prepared in $x$, has bounded $C^1$-norm, where only differentiation with respect to the $x$ is taken into account. Coupled with remarks \ref{remdefbm} and \ref{remdefprepx}, properties $(3)$ and $(4)$ of Theorem \ref{prepara} ensure the uniformity of the results.
\end{remark}

\section{Intermediate results} \label{secinterm}
In this section we prove as an illustration a power substitution result for functions obtained by Theorem \ref{prepara}. It is not new, but we present in a slightly different way, such that the exposition of the proof of Theorem \ref{thmcrpara} is more clear. Before stating and proving this proposition, we need some lemmas.

\begin{lemma}\label{lemmaAB}
Let $A_1,A_2,B_1,B_2$ be strictly positive real numbers. Then we have
\[
\sum_{1 \leq |\lambda| \leq |\nu|} B_1A_1^{|\lambda|} |\lambda|! \sum_{s = 1}^{|\nu|} \sum_{p_s(\nu,\lambda)} \nu! \prod_{j=1}^s \frac{(B_2 A_2^{|l_j|} |l_j|!)^{|k_j|}}{k_j!(l_j)!^{|k_j|}} = \frac{mA_1B_1B_2}{mA_1B_2+1}(A_2(mA_1B_2+1))^{|\nu|}|\nu|!,
\]
where the sum is subject to the same conditions as described in Proposition \ref{faa}.
\end{lemma}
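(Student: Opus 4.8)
The plan is to recognize the left-hand side as a Faà di Bruno-type sum and to evaluate it by summing geometric-type series, exploiting that both the inner monomial factors and the outer factor are of exponential shape. First I would deal with the innermost object: for fixed $\nu$ and $\lambda$, consider
\[
S(\nu,\lambda) = \sum_{s=1}^{|\nu|} \sum_{p_s(\nu,\lambda)} \nu! \prod_{j=1}^s \frac{(B_2 A_2^{|l_j|} |l_j|!)^{|k_j|}}{k_j!(l_j)!^{|k_j|}}.
\]
Since $\sum_i |k_i| l_i = \nu$ forces $\prod_j A_2^{|l_j||k_j|} = A_2^{|\nu|}$, and $\sum_i k_i = \lambda$ forces $\prod_j B_2^{|k_j|} = B_2^{|\lambda|}$, these two factors pull out of the sum as $A_2^{|\nu|} B_2^{|\lambda|}$. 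What remains inside is exactly the combinatorial sum that appears in Faà di Bruno for the specific pair of functions $f(y) = \sum_{k} y^k$ (a geometric-type series in the $d$ variables, to be chosen with the right radius) and $g(x)$ a function of $m$ variables whose derivatives $g^{(l)}$ are $|l|!$ in size; more cleanly, I would simply apply Proposition \ref{faa} to a cleverly chosen pair of explicit one-variable-style functions and read off the identity, rather than re-deriving the combinatorics.

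Concretely, the cleanest route is: take $f: \R^d \to \R$ and $g: \R^m \to \R$ to be chosen so that $f^{(\lambda)}(g(x)) = B_1 A_1^{|\lambda|} |\lambda|!$ and $g^{(l)}(x) = B_2 A_2^{|l|}|l|!$ for all relevant multi-indices (one can realize such $f,g$ on suitable domains by sums of geometric series, e.g. with $d$ chosen equal to the number of "slots" and each coordinate of $g$ a scalar multiple of $x_1/(1-A_2 x_1)$-type functions — the precise realization is routine and I would not belabor it). Then the left-hand side of the lemma is precisely $(f\circ g)^{(\nu)}(x)$ by Proposition \ref{faa}, and one computes $(f\circ g)^{(\nu)}(x)$ directly because $f \circ g$ is again an explicit elementary function of exponential type. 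Carrying this out gives $(f\circ g)^{(\nu)}(x) = \widetilde{B}\widetilde{A}^{|\nu|}|\nu|!$ with $\widetilde{A} = A_2(mA_1B_2+1)$ and $\widetilde{B} = \tfrac{mA_1B_1B_2}{mA_1B_2+1}$, matching the claimed right-hand side. This is morally the quantitative content of part (3) of Proposition \ref{propmildop} (composition of mild functions), and indeed the constants $\widetilde{A},\widetilde{B}$ there agree with these; so an alternative is to cite that proposition's proof structure and track the constants, but a self-contained geometric-series computation is cleaner here.

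I expect the main obstacle to be bookkeeping rather than conceptual: correctly matching the factor $\prod_j k_j! (l_j)!^{|k_j|}$ in the denominator against what Faà di Bruno produces, and in particular getting the factor of $m$ (the dimension of the source, which enters because each derivative $g^{(l_j)}$ is $\R^m$-indexed and the multinomial count over $\lambda \in \N^d$ contributes the $(1 + m A_1 B_2)$ base) to land in exactly the right place. I would therefore be careful to state at the outset which indices range over $\N^m$ and which over $\N^d$, fix $d$ as large as needed, and verify the identity first in the scalar case $m = d = 1$ (where it reduces to a clean one-variable generating-function identity) before invoking the general multivariate Faà di Bruno formula. The edge cases $|\nu| = 0$ and the convention $0^0 = 1$ should be checked but are immediate.
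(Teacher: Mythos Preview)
Your approach is correct and is precisely the classical generating-function trick that underlies the paper's citation. The paper does not give a self-contained argument at all: it simply points to \cite[Theorem 2.10]{smoothpara} and Gevrey's original work \cite{gevrey}, where exactly this method---choosing explicit rational test functions whose derivatives realize the prescribed quantities, applying Fa\`a di Bruno in reverse, and reading off the answer from the composite---is carried out. Concretely, with $d=e=m$, the choices $g_i(x)=B_2A_2 s/(1-A_2 s)$ for $s=x_1+\cdots+x_m$ and $f(y)=B_1/(1-A_1(y_1+\cdots+y_m))$ give $g_i^{(l)}(0)=B_2A_2^{|l|}|l|!$ and $f^{(\lambda)}(0)=B_1A_1^{|\lambda|}|\lambda|!$, and then $(f\circ g)(x)=B_1(1-A_2 s)/(1-A_2(1+mA_1B_2)s)$ yields the claimed right-hand side upon differentiation at $0$. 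Your instinct to verify the scalar case $m=1$ first and to keep track of which indices live in $\N^d$ versus $\N^e$ is sound; with $d=e=m$ (which is the standing convention here, and the source of the factor $m$) the bookkeeping is straightforward.
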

\begin{proof}
This is essentially what is shown in the proof of \cite[Theorem 2.10]{smoothpara}, which is deduced from Gevrey's work \cite{gevrey}.
\end{proof}

In particular, this formula implies the result on compositions of mild functions in Proposition \ref{propmildop}. Together with the Fa\`a di Bruno formula, this lemma will be the key to all our estimates. In the following two lemmas we study the derivatives of functions that are prepared in $x$.

\begin{lemma}\label{lemmabmmap}
Let $b: U \subseteq T \times (0,1)^m \to \R$, where for each $t \in T$, $U_t$ is an open subset of $(0,1)^m$, be a bounded monomial map. Then for any $\nu \in \N^m$, $t \in T$ and $x \in U_t$, we have that
\[
\left| b_t^{(\nu)}(x) \right| \leq x^{-\nu} |b_t(x)| M_b^{|\nu|} |\nu|!,
\]
where $M_b = \max(|\mu_1|, \ldots, |\mu_m|,1)$.
\end{lemma}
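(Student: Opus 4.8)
The plan is to compute $b_t^{(\nu)}$ directly from the explicit form of a bounded monomial map and bound the result. Recall that $b(t,x) = a(t)x^\mu$, so for fixed $t$ the function $b_t$ is just a constant times the monomial $x^\mu = x_1^{\mu_1}\cdots x_m^{\mu_m}$. First I would differentiate: for a single variable $x_i$, one has $\frac{\partial^{\nu_i}}{\partial x_i^{\nu_i}} x_i^{\mu_i} = \mu_i(\mu_i-1)\cdots(\mu_i-\nu_i+1)\, x_i^{\mu_i-\nu_i}$, and since the variables separate,
\[
b_t^{(\nu)}(x) = a(t)\prod_{i=1}^m \mu_i(\mu_i-1)\cdots(\mu_i-\nu_i+1)\, x_i^{\mu_i-\nu_i} = b_t(x)\, x^{-\nu}\prod_{i=1}^m\prod_{j=0}^{\nu_i-1}(\mu_i-j).
\]
This already isolates the factor $x^{-\nu}|b_t(x)|$, so it remains to bound the product of falling factorials by $M_b^{|\nu|}|\nu|!$.

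Next I would estimate $\bigl|\prod_{j=0}^{\nu_i-1}(\mu_i-j)\bigr|$. Using $|\mu_i - j| \le |\mu_i| + j \le M_b + j \le M_b(1+j)$ (here $M_b \ge 1$ is what makes this work), one gets $\bigl|\prod_{j=0}^{\nu_i-1}(\mu_i-j)\bigr| \le M_b^{\nu_i}\prod_{j=0}^{\nu_i-1}(1+j) = M_b^{\nu_i}\,\nu_i!$. Taking the product over $i$ gives $\prod_{i=1}^m M_b^{\nu_i}\nu_i! = M_b^{|\nu|}\,\nu!$, and since $\nu! \le |\nu|!$ (the multinomial coefficient $\binom{|\nu|}{\nu}$ is at least $1$), this is bounded by $M_b^{|\nu|}|\nu|!$. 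Combining with the display above yields exactly the claimed inequality $|b_t^{(\nu)}(x)| \le x^{-\nu}|b_t(x)|\,M_b^{|\nu|}|\nu|!$.

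There is no real obstacle here; this is a direct computation. The only points requiring a little care are: making sure the bound is genuinely uniform in $t$ — which it is, since $M_b$ depends only on the exponent vector $\mu$ and not on $a$ — and the small combinatorial step $\nu! \le |\nu|!$, which one might equally absorb by noting $M_b^{|\nu|}\nu! \le M_b^{|\nu|}|\nu|!$. One should also note that $x^{\mu_i-\nu_i}$ makes sense since $x_i \in (0,1)$, so the factor $x^{-\nu}$ is the expected one. The estimate is stated for all $\nu \in \N^m$ (no restriction $|\nu| \le r$), which is fine because $b_t$, being a real power monomial on an open subset of $(0,1)^m$, is $C^\infty$ there.
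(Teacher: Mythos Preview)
Your proof is correct and follows exactly the approach of the paper: write $b_t^{(\nu)}(x) = c(\nu,\mu)\,x^{-\nu}b_t(x)$ and then bound the constant $c(\nu,\mu)$ by $M_b^{|\nu|}|\nu|!$. The paper merely asserts that ``one checks'' this last bound, whereas you have supplied the actual verification via $|\mu_i - j| \le M_b(1+j)$ and $\nu! \le |\nu|!$.
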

\begin{proof}
Let $t \in T$ and $x \in U_t$. By the form of $b_t$, it is clear that for any $\nu \in \N^m$:
\[
b_t^{(\nu)}(x) = c(\nu,\mu) x^{-\nu}b_t(x),
\]
for some constant $c(\nu,\mu)$ depending on $\nu$ and $\mu$. One checks that
\[
|c(\nu,\mu)| \leq M_b^{|\nu|} |\nu|!,
\]
from which the lemma follows.
\end{proof}

\begin{remark}\label{remlemmabmmap}
Note $M_b$ does not depend on $t$. Moreover, since $b$ is bounded, we can bound $|b_t(x)|$ independent of $t$. It will be useful later to leave the factor $|b_t(x)|$ as it is, but since $M_b$ only depends on $b$, but not on $t$, this allows us to ``ignore the $t$ variables'', i.e., consider the case $U \subseteq (0,1)^m$, which we will always do later on.
\end{remark}


\begin{lemma}\label{lemmaprepxmap}
Suppose that $f: U \subseteq T \times (0,1)^m \to \R$, where for each $t \in T$, $U_t$ is an open subset of $(0,1)^m$, is prepared in $x$, thus $f(t,x) = b_j(t,x)F(b(t,x))$, where $b_j$ is a component function of the associated bounded monomial map $b$ of $f$ (see Definition \ref{defprepx}). Then there exist $A_f, B_f > 0$ such that for any $\nu \in \N^m$, $t \in T$ and $x \in U_t$, we have that
\[
\left| f_t^{(\nu)}(x) \right| \leq x^{-\nu} |b_{t,j}(x)| B_f A_f^{|\nu|} |\nu|!.
\]
\end{lemma}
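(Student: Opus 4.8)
The plan is to apply the Fa\`a di Bruno formula to $f_t = b_{t,j}\cdot (F\circ b_t)$, or more directly to write $f_t^{(\nu)}$ using the product rule after differentiating $F\circ b_t$, and then control each factor using the two preceding lemmas together with Lemma \ref{lemmaAB}. First I would observe, via Remark \ref{remlemmabmmap}, that it suffices to treat the case $U\subseteq(0,1)^m$, so I drop the parameter $t$ from the notation. The unit $F$ is analytic on an open neighbourhood of the closure of $\im(b)$, hence $(A_F,B_F,0)$-mild there for some $A_F,B_F$ depending only on $F$ (Remark \ref{remdefprepx}); and by Lemma \ref{lemmabmmap}, for every component $b_i$ and every $l\in\N^m$ we have $|b_i^{(l)}(x)|\le x^{-l}|b_i(x)|M_b^{|l|}|l|!$, with $M_b$ depending only on $b$. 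Since $b$ is bounded, say $|b_i(x)|\le B$ uniformly, the bound reads $|b_i^{(l)}(x)|\le x^{-l} B M_b^{|l|}|l|!$.

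The main step is to estimate $(F\circ b)^{(\nu)}(x)$. By Proposition \ref{faa},
\[
(F\circ b)^{(\nu)}(x) = \sum_{1\le|\lambda|\le|\nu|} F^{(\lambda)}(b(x)) \sum_{s=1}^{|\nu|}\sum_{p_s(\nu,\lambda)} \nu!\prod_{j=1}^s \frac{(b^{(l_j)}(x))^{k_j}}{k_j!(l_j!)^{|k_j|}}.
\]
Here $F^{(\lambda)}$ is a vector of partial derivatives of the components of $F$, each bounded by $B_F A_F^{|\lambda|}|\lambda|!$, and $(b^{(l_j)}(x))^{k_j}$ denotes $\prod_{i} (b_i^{(l_j)}(x))^{(k_j)_i}$. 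Plugging the Lemma \ref{lemmabmmap} bound into each factor produces an overall factor $x^{-\nu}$ (because $\sum_i |k_i| l_i = \nu$, so the negative powers of the coordinates of $x$ multiply up to exactly $x^{-\nu}$), a factor $B^{|\lambda|}$ coming from the $|\lambda|=\sum|k_i|$ copies of the bound on $|b_i(x)|$, and the remaining combinatorial/factorial data is precisely of the shape appearing in Lemma \ref{lemmaAB} with $A_1=A_F$, $B_1=B_F$, $A_2=M_b$, $B_2=B$ (after absorbing the $B^{|\lambda|}$ either into $B_1$ or into the estimate). That lemma collapses the double sum to a closed form $\frac{mA_1B_1B_2}{mA_1B_2+1}\bigl(M_b(mA_FB+1)\bigr)^{|\nu|}|\nu|!$, i.e.\ a bound of the form $x^{-\nu} B_1' (A_1')^{|\nu|}|\nu|!$ for constants $B_1',A_1'$ depending only on $F$ and $b$. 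Thus $F\circ b$ is, up to the harmless $x^{-\nu}$ factor, $(A_1',B_1',0)$-mild.

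Finally I would recombine: $f = b_j\cdot(F\circ b)$, so by the Leibniz rule $f^{(\nu)}(x)=\sum_{\rho\le\nu}\binom{\nu}{\rho} b_j^{(\rho)}(x)\,(F\circ b)^{(\nu-\rho)}(x)$. Using $|b_j^{(\rho)}(x)|\le x^{-\rho}|b_j(x)|M_b^{|\rho|}|\rho|!$ and the bound just obtained for $(F\circ b)^{(\nu-\rho)}$, every term carries the common factors $x^{-\nu}$ and $|b_j(x)|$, the factorials combine via $\binom{\nu}{\rho}|\rho|!\,|\nu-\rho|!\le |\nu|!$ (summed over $\rho\le\nu$, with the number of such $\rho$ bounded by $2^{|\nu|}$, which is absorbed into the base of the exponential), and one is left with $|f^{(\nu)}(x)|\le x^{-\nu}|b_j(x)|\,B_f A_f^{|\nu|}|\nu|!$ for suitable $A_f,B_f$ depending only on $f$. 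This is the claimed inequality. The one point needing care — and the place I expect the bookkeeping to be heaviest — is checking that the product of the various $x^{-l_j}$ factors in the Fa\`a di Bruno sum really telescopes to exactly $x^{-\nu}$ using the constraint $\sum_i|k_i|l_i=\nu$, and that all constants that appear genuinely depend only on $F$ and $b$ and not on $t$; both follow from the constraints in Proposition \ref{faa} and from Remark \ref{remlemmabmmap}, but they are what make the lemma work.
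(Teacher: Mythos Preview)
Your proposal is correct and follows essentially the same approach as the paper: Leibniz rule to separate $b_j$ from $F\circ b$, Fa\`a di Bruno plus Lemma \ref{lemmaAB} to bound the derivatives of $F\circ b$ (using the constraint $\sum_j |k_j|l_j=\nu$ to collect the $x^{-l_j}$ factors into $x^{-\nu}$), and Lemma \ref{lemmabmmap} for $b_j$ to retain the factor $|b_j(x)|$. The paper's bookkeeping in the final recombination is slightly different (it uses $\sum_{\nu_1+\nu_2=\nu} C(\nu_1,\nu_2)\le 2^{|\nu|}$ together with $|\nu_1|!\,|\nu_2|!\le|\nu|!$ rather than your per-term inequality $\binom{\nu}{\rho}|\rho|!\,|\nu-\rho|!\le|\nu|!$), but the two are equivalent.
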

\begin{proof}
Let $t \in T$, $x \in U_t$ and $\nu \in \N^m$. By the product rule, we have that
\[
 f_t^{(\nu)}(x) = \sum_{\nu_1 + \nu_2 = \nu} C(\nu_1,\nu_2) b_{t,j}^{(\nu_1)}(x) (F \circ b_t)^{(\nu_2)}(x),
\]
where $C(\nu_1,\nu_2)$ are positive constants, depending on $\nu_1$ and $\nu_2$ only, satisfying
\[
\sum_{\nu_1 + \nu_2 = \nu} C(\nu_1,\nu_2) \leq 2^{|\nu|}.
\]
By Remark \ref{remdefprepx}, there exist $A_F,B_F>0$ such that $F$ is $(A_F,B_F,0)$-mild. By Lemma \ref{lemmabmmap}, and because $b$ has bounded range, there are $A_b,B_b >0$ such that 
\[
|b_{t,\ell}^{(\nu')}(x)| \leq x^{-\nu'} B_b A_b^{|\nu'|} |\nu'|!,
\]
for any component function $b_{t,\ell}$ of $b_t$ and $\nu' \in \N^m$. Therefore, using the Fa\`a di Bruno formula \ref{faa} and Lemma \ref{lemmaAB}, one can find $A,B>0$, depending only on $b$ and $F$, such that
\[
\left| (F \circ b_t)^{(\nu_2)}(x) \right| \leq x^{-\nu_2} B A^{|\nu_2|} |\nu_2|!.
\] 
(You need the relation $-\sum_{j=1}^s |k_j|l_j = -\nu_2$, coming from the Fa\`a di Bruno formula, to show this.) By Lemma \ref{lemmabmmap}, we also have that
\[
\left| b_{t,j}^{(\nu_1)}(x) \right| \leq x^{-\nu_1}M_{b_j}^{|\nu_1|} |\nu_1|! |b_{j,t}(x)|.
\]
Now we conclude as follows:
\begin{align*}
\left| f_t^{(\nu)}(x) \right| &\leq \sum_{\nu_1 + \nu_2 = \nu} C(\nu_1,\nu_2) \left| b_{t,j}^{(\nu_1)}(x) \right| \left| (F \circ b_t)^{(\nu_2)}(x) \right| \\
&\leq \sum_{\nu_1+\nu_2 = \nu} C(\nu_1,\nu_2) x^{-\nu_1}M_{b_j}^{|\nu_1|} |\nu_1|! |b_{j,t}(x)| x^{-\nu_2} B A^{|\nu_2|} |\nu_2|! \\
&\leq x^{-\nu} |b_{j,t}(x)| B_f |\nu|! \sum_{\nu_1+\nu_2 = \nu} C(\nu_1,\nu_2) M_{b_j}^{|\nu_1|} B A^{|\nu_2|} \\
& \leq x^{-\nu} |b_{j,t}(x)| B_f A_f^{|\nu|} |\nu|!,
\end{align*}
where $A_f = 2\max(A,M_{b_j})$ and $B_f = B$.
\end{proof}

\begin{remark}\label{remlemmaprepxmap}
In fact the lemma shows that the statement for maps prepared in $x$ can be reduced to the same statement for bounded monomial maps. So, although one should be careful, we can safely assume $f = b_j$ for the computations we plan to do. Moreover, we see that $A_f$ and $B_f$ only depend on $f$, and thus are uniform. From now on, we will always ignore the $t$ variables for this reason.
\end{remark}

The following proposition is the power substitution result we mentioned in the beginning of this section. Essentially, it is \cite[Proposition 2.13]{smoothpara}, which is a more precise version of the original statement by Cluckers, Pila and Wilkie in \cite{unif}. It is a result on a composition of a map that is prepared in $x$ and a certain ``power function'', which will yield that the composition is mild up to order $r$. The proof strategy is to apply the Fa\`a di Bruno formula and then bound each term in this expression in such a way that we can apply Lemma \ref{lemmaAB}. This is also the original proof strategy, but, as we stated before, we will write it down in a slightly different way. From this point of view, the proof of Theorem \ref{thmcrpara} in the next section will be more clear (hopefully).

\begin{remark}\label{remconvention}
We make the following conventions with respect to properties of families of functions. If $f: U \subseteq T \times (0,1)^m \to \R^n$ is a family of functions, we say that it is $(A,B,C)$-mild up to order $r$ if for any $t \in T$, $f_t$ is $(A,B,C)$-mild up to order $r$. Moreover, to compute the $C^1$-norm of $f$, one only takes differentiation with respect to the $x$ variables into account. More precisely, by $|f|_1 \leq B$, we mean that for any $t \in T$, $\left|f_t\right|_1 \leq B$.
\end{remark}

\begin{proposition}\label{proppowersub}
Let $f: U \subseteq T \times (0,1)^m \to \R$ be prepared in $x$, where for each $t \in T$, $U_t$ is an open subset of $(0,1)^m$, and suppose that its associated bounded monomial map $b$ has bounded $C^1$-norm. Let $r \in \N$ and define the map $P_r: T \times (0,1)^m \to T \times (0,1)^m$ by
\[
P_r(t,x_1,\ldots,x_m) = (t,x_1^r,\ldots,x_m^r).
\]
Then there exist $A,B > 0$ such that the function $f \circ P_r: P_r^{-1}(U) \to \R$ is $(Ar,B,0)$-mild up to order $r$.
\end{proposition}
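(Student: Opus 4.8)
The plan is to reduce to the case of a bounded monomial map and then compute directly. By Remark \ref{remlemmaprepxmap}, it suffices to treat $f = b_j$, a single bounded monomial $b_j(t,x) = a(t)x^\mu$, and moreover by Remark \ref{remlemmabmmap} we may ignore the $t$ variables entirely and write $f(x) = x^\mu$ (absorbing the bounded factor $a(t)$ into the constant $B$). So the heart of the matter is: given $f(x) = x^\mu$ with the $C^1$-norm of $f$ bounded (which by Lemma \ref{lemmabmmap} amounts to controlling $M_b = \max(|\mu_1|,\ldots,|\mu_m|,1)$ and the bound on $|f|$), show that $f \circ P_r$ is $(Ar,B,0)$-mild up to order $r$.

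First I would observe that $(f \circ P_r)(x) = (x_1^r)^{\mu_1}\cdots(x_m^r)^{\mu_m} = x^{r\mu}$, so $f \circ P_r$ is itself a bounded monomial in the $x$ variables with exponent vector $r\mu$. Applying Lemma \ref{lemmabmmap} directly to this monomial would give
\[
\left|(f\circ P_r)^{(\nu)}(x)\right| \leq x^{-\nu}\,|(f\circ P_r)(x)|\,(rM_b)^{|\nu|}|\nu|!,
\]
and the difficulty is that the factor $x^{-\nu}$ blows up near the boundary. The key point — and this is where the power substitution earns its keep — is that for $|\nu| \leq r$ the bad factor $x^{-\nu}$ is absorbed by the monomial $x^{r\mu}$ when $\mu$ has nonnegative entries, because $r\mu_i - \nu_i \geq 0$. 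More carefully, one writes $(f\circ P_r)^{(\nu)}(x) = c(\nu, r\mu)\,x^{r\mu - \nu}$ and checks that each coordinate of the exponent $r\mu-\nu$ is bounded below: if $\mu_i \geq 0$ then $r\mu_i - \nu_i \geq -\nu_i \geq -r$, but in fact the relevant case from Theorem \ref{prepara} / Definition \ref{defbm} is that the monomial is bounded, so one handles the sign of each $\mu_i$ separately, using on bounded $x_i \in (0,1)$ that $x_i^{r\mu_i - \nu_i}$ is bounded by a constant depending only on $\mu$ when $\mu_i \leq 0$ is impossible for a bounded-range monomial unless paired appropriately. The clean statement to extract is that $|x^{r\mu-\nu}|$ is bounded by a constant depending only on $\mu$ (not on $\nu$ with $|\nu|\le r$, not on $r$, not on $x$), which follows because raising each variable to the $r$-th power before differentiating at most $r$ times never produces a negative exponent in a direction where $x_i \to 0$.

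Then I would combine the bounds: $|c(\nu,r\mu)| \leq (rM_b)^{|\nu|}|\nu|!$ from the argument in Lemma \ref{lemmabmmap} (with $M_b$ replaced by $rM_b$ since the exponent is $r\mu$), and $|x^{r\mu-\nu}| \leq B$ for a uniform constant $B$, yielding
\[
\left|(f\circ P_r)^{(\nu)}(x)\right| \leq B\,(rM_b)^{|\nu|}\,|\nu|! \leq B\,(M_b r)^{|\nu|}\,|\nu|!^{1+0}
\]
for all $|\nu| \leq r$, which is exactly $(Ar, B, 0)$-mild up to order $r$ with $A = M_b$. For the general prepared-in-$x$ case $f = b_j F(b)$ one either invokes Remark \ref{remlemmaprepxmap} to reduce to the monomial case, or runs the same Faà di Bruno computation as in Lemma \ref{lemmaprepxmap}: write $(f\circ P_r)^{(\nu)}$ via the product rule as a sum over $\nu_1 + \nu_2 = \nu$ of $(b_j\circ P_r)^{(\nu_1)}\cdot(F\circ b\circ P_r)^{(\nu_2)}$, bound the first factor by the monomial estimate just proved, and bound the second by applying Faà di Bruno (Proposition \ref{faa}) together with Lemma \ref{lemmaAB}, where the inner monomial derivatives $(b_\ell \circ P_r)^{(l)}$ again contribute the harmless bounded factor and a $(M_b r)^{|l|}|l|!$ growth, and $F$ contributes its fixed $(A_F,B_F,0)$-mild bound; the relation $\sum_j |k_j| l_j = \nu_2$ ensures the $x$-powers cancel correctly and the $r$'s only enter to the first power in each $A$-type factor, so Lemma \ref{lemmaAB} delivers the final $(Ar, B, 0)$ shape.

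The main obstacle is the bookkeeping of the exponent vector $r\mu - \nu$ and verifying the uniform bound $|x^{r\mu-\nu}| \leq B$ in the regime $|\nu| \leq r$ — specifically, making precise why no coordinate of $r\mu - \nu$ can be negative in a problematic way given that the original monomial has bounded range and $|\nu|$ does not exceed $r$. Everything after that is a mechanical application of Lemma \ref{lemmabmmap}, Proposition \ref{faa} and Lemma \ref{lemmaAB} in the pattern already established in the proof of Lemma \ref{lemmaprepxmap}; the only genuinely new ingredient over that lemma is the observation that the power substitution $P_r$ converts the dangerous $x^{-\nu}$ factor into something bounded precisely because we only differentiate up to order $r$.
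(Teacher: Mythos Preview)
Your overall route --- recognizing that $b_t \circ P_r$ is itself the monomial $a(t)x^{r\mu}$ and differentiating it directly via Lemma \ref{lemmabmmap} rather than through Fa\`a di Bruno --- is legitimate and shorter than the paper's argument for this particular proposition. But the step you yourself flag as ``the main obstacle'', bounding $x^{-\nu}|b_t(P_r(x))|$ uniformly for $|\nu|\le r$, is genuinely mishandled, and the error traces to your reading of the hypothesis. You write that bounded $C^1$-norm ``amounts to controlling $M_b$ and the bound on $|f|$''; it does not. The monomial $b(x)=x^{1/2}$ on $(0,1)$ has $M_b=1$ and $|b|\le 1$, yet $b'(x)=\tfrac12 x^{-1/2}$ is unbounded, and $(b\circ P_r)(x)=x^{r/2}$ fails to be mild up to order $r$ --- exactly the cautionary remark the paper places immediately after this proposition. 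Likewise your heuristic ``never produces a negative exponent in a direction where $x_i\to 0$'' is false: for $b(x_1,x_2)=x_1^2/x_2$ on $\{x_1<x_2\}$ (which \emph{does} have bounded $C^1$-norm) and $\nu=(0,r)$ one has $r\mu_2-\nu_2=-2r$, and $x_2$ can tend to $0$ on the domain; what rescues boundedness is the compensating $x_1^{2r}$ factor, not coordinate-wise nonnegativity, and ``bounded range'' alone does not force such compensation.

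The missing ingredient is precisely the paper's step (\ref{argumentC1}), translated into your language. If some $\nu_i>0$ with $\mu_i=0$ then the $\nu$-th derivative of $x^{r\mu}$ vanishes, so assume $\nu_i>0\Rightarrow\mu_i\neq 0$; pick $I$ with $x_I=\min\{x_i:\nu_i\neq0\}$. Then $\mu_I\neq0$, so the bound $|b_t^{(e_I)}(y)|\le B_1$ on first partials at $y=P_r(x)$ yields $|b_t(P_r(x))|\le (B_1/|\mu_I|)\,x_I^r$, and by Lemma \ref{lemmabmmap}
\[
|(b_t\circ P_r)^{(\nu)}(x)| \;\le\; (rM_b)^{|\nu|}|\nu|!\,x^{-\nu}\,|b_t(P_r(x))| \;\le\; B'\,(rM_b)^{|\nu|}|\nu|!\,x_I^{\,r-|\nu|} \;\le\; B'\,(rM_b)^{|\nu|}|\nu|!,
\]
using $x^{-\nu}\le x_I^{-|\nu|}$ (minimality of $x_I$) and $|\nu|\le r$. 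With this fix your sketch for the full prepared-in-$x$ case via the product rule and Lemma \ref{lemmaAB} goes through. The paper deliberately runs the longer Fa\`a di Bruno computation on $b\circ P_r$ not because it is needed here, but because that template --- in which one cannot simply recognize the composite as a monomial --- is exactly what is reused for the map $\phi^r$ in the proof of Theorem \ref{thmcrpara}.
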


\begin{proof}
Arguing as the proof of Lemma \ref{lemmaprepxmap}, we may suppose that $f$ is equal to a bounded monomial map $b: C \to \R$, which has bounded $C^1$-norm. In order to show that $b \circ P_r$ is $(Ar,B,0)$-mild up to order $r$, let $t \in T$ be fixed but arbitrary. We have to show that $(b \circ P_r)_t = b_t \circ P_{r,t}$ is $(Ar,B,0)$-mild for some $A,B>0$, independent of $t$. To do so, we want to apply Lemma \ref{lemmaAB}, after using the Fa\`a di Bruno formula and suitably bounding the derivatives of $b_t$ and $P_{r,t}$. By the uniform bound on the $C^1$-norm of $b$, and by the form of $b$ and $P_r$, we may ignore the $t$ variables (see also Remark \ref{remlemmabmmap} and Remark \ref{remlemmaprepxmap}).

Let $\nu \in \N^m$ with $|\nu| \leq r$ and $x \in P_r^{-1}(U)$. By the Fa\`a di Bruno formula, Proposition \ref{faa}, and the triangle inequality, we have that
\[
\left|(b \circ P_r)^{(\nu)}(x)\right| \leq \sum_{1 \leq |\lambda| \leq |\nu|} \left| b^{(\lambda)}(P_{r}(x)) \right| \sum_{s = 1}^{|\nu|} \sum_{p_s(\nu,\lambda)} \nu! \prod_{j = 1}^s \frac{\left|\left(P_{r}^{(l_j)}(x)\right)^{k_j}\right|}{k_j!(l_j!)^{|k_j|}}.
\]
We want to find $A_1,A_2,B_1,B_2>0$, such that for a fixed term in this sum, so some $\lambda \in \N^m$, $s \in \{1,\ldots,|\nu|\}$ and $k_1,\ldots,k_s,l_1,\ldots,l_s \in p_s(\nu,\lambda)$, we have that
\begin{equation}\label{tobound}
\left| b^{(\lambda)}(P_{r}(x)) \right| \prod_{j = 1}^s \frac{\left|\left(P_{r}^{(l_j)}(x)\right)^{k_j}\right|}{k_j!(l_j!)^{|k_j|}} \leq B_1A_1^{|\lambda|}|\lambda|! \prod_{j=1}^s \frac{(B_2A_2^{|l_j|} |l_j|!)^{|k_j|}}{k_j!(l_j)!^{|k_j|}}.
\end{equation}
We will show that we can do so with $A_1 = M_b$ (as in Lemma \ref{lemmabmmap}) and $A_2 = r$. Then the proposition follows by Lemma \ref{lemmaAB}. The claim on $A_2$ is rather clear by the form of $P_{r}$ and it is clear that $B_2$ will be $1$. Let us now prove this formally.

We consider a fixed term as described above, so of the form of the left hand side of (\ref{tobound}). Let $I$ be such that $x_I = \min \{x_i \mid \lambda_i \neq 0\}$. By Lemma \ref{lemmabmmap}, we have that
\[
\left| b^{(\lambda)}(x) \right| \leq x^{-\lambda} M_b^{|\lambda|} |\lambda|! |b(x)|.
\]
Now write $\lambda = \lambda' + \beta$, where $\beta$ corresponds to the first order derivative with respect to $x_I$. We find that
\begin{equation} \label{argumentC1}
\left| b^{(\lambda)}(x) \right| = \left| \left(b^{(\beta)}\right)^{(\lambda')}(x) \right| \leq x^{-\lambda'}M_b^{|\lambda|}|\lambda|! \left|b^{(\beta)}(x)\right|. \tag{C1}
\end{equation}
By our assumption on $b$ it follows that there exists some $B_1>0$ such that $\left|b^{(\beta)}(x)\right| \leq B_1$, where $B_1$ does not depend on $x$ (in particular on the choice of $I$). Note that $M_b$ and $B_1$ are indeed independent of $t$.

Now the first part of the proof is finished by bounding the left hand side of (\ref{tobound}) as follows:
\begin{align}\label{bound2}
\left| b^{(\lambda)}(P_{r}(x)) \right| \prod_{j = 1}^s \frac{\left|\left(P_{r}^{(l_j)}(x)\right)^{k_j}\right|}{k_j!(l_j!)^{|k_j|}} &\leq P_{r}(x)^{-\lambda'} B_1 M_b^{|\lambda|} |\lambda|!\prod_{j = 1}^s \frac{\left|(P_{r}^{(l_j)}(x))\right|^{k_j}}{k_j!(l_j!)^{|k_j|}} \nonumber \\
&= P_{r,I}(x) B_1 M_b^{|\lambda|} |\lambda|! \prod_{j = 1}^s \frac{\left|\left(P_{r}^{(l_j)}(x)\right)^{k_j}\right|}{P_{r}(x)^{k_j}} \frac{1}{k_j!(l_j!)^{|k_j|}},
\end{align}
where we have used that $\lambda = \sum_{j=1}^s k_j$. Thus, by the particular form of $b$, the computations above naturally lead us to finding the desired upper bound for
\[
\frac{\left| \left(P_{r}^{(l_j)}(x)\right)^{k_j} \right|}{P_{r}(x)^{k_j}} = \prod_{\ell = 1}^m \left(\frac{\left| P_{r,\ell}^{(l_j)}(x) \right|}{P_{r,\ell}(x)}\right)^{k_{j,\ell}}.
\]
This is the second part of the proof. Obviously, it can be unbounded, but that is where the factor $P_{r,I}(x) = x_I^r$, which we obtained by the fact that $b$ has bounded $C^1$-norm, comes into play. We have that $P_{r,\ell}(x) = x_\ell^r$, thus it follows that
\[
\left(\frac{\left| P_{r,\ell}^{(l_j)}(x) \right|}{P_{r,\ell}(x)}\right)^{k_{j,\ell}} \leq (r^{|l_j|} |l_j|! x_\ell^{-l_{j,\ell}})^{k_{j,\ell}} \leq (r^{|l_j|} |l_j|!)^{k_{j,\ell}} x_I^{-k_{j,\ell}l_{j,\ell}},
\]
where the last inequality holds because $x_I \leq x_\ell$ if $\lambda_\ell \neq 0$, by the choice of $I$, and because $k_{j,\ell} = 0$ for all $j$ if $\lambda_\ell = 0$, by the relation $\sum_{j=1}^s k_j = \lambda$ in $\N^m$. We conclude that
\[
\prod_{\ell = 1}^m \left(\frac{\left| P_{r,\ell}^{(l_j)}(x) \right|}{P_{r,\ell}(x)}\right)^{k_{j,\ell}} \leq x_I^{-\sum_{\ell = 1}^m k_{j,\ell}l_{j,\ell}}(r^{|l_j|}|l_j|!)^{|k_j|}.
\]
Finally, we have that
\[
\prod_{j=1}^s x_I^{-\sum_{\ell = 1}^m k_{j,\ell}l_{j,\ell}} =  x_I^{-\sum_{\ell=1}^m \sum_{j=1}^s k_{j,\ell}l_{j,\ell}}  \leq x_I^{-|\nu|},
\]
since $\sum_{j=1}^s |k_j|l_j = \nu$. We can now further bound (\ref{bound2}):
\[
P_{r,I}(x) B_1M^{|\lambda|}|\lambda|! \prod_{j = 1}^s \frac{\left|\left(P_{r}^{(l_j)}(x)\right)^{k_j}\right|}{P_{r}(x)^{k_j}} \frac{1}{k_j!(l_j!)^{|k_j|}} \leq x_I^{r-|\nu|} B_1 M_b^{|\lambda|}|\lambda|! \prod_{j = 1}^s \frac{(r^{|l_j|}|l_j|!)^{|k_j|}}{k_j!(l_j!)^{|k_j|}}.
\]
Since $|\nu| \leq r$, we have that $x_I^{r-|\nu|} \leq 1$. We conclude that we can pick $A_1 = M_b$, $A_2 = r$ and $B_2 = 1$, as claimed before, and thus, this completes the proof.
\end{proof}

\begin{remark}
It is crucial that the $C^1$-norm of the associated bounded monomial map is bounded. For instance, consider the bounded monomial map $x^{1/2}: (0,1) \to (0,1)$. Clearly, the function $x^{r/2}: (0,1) \to (0,1)$ is not mild up to order $r$.
\end{remark}

\section{Proofs of the main theorems} \label{secmain}
In this section, we show the main results, the $C^r$-parametrization theorem (Theorem \ref{thmcrpara}), and the mild parametrization theorem (Theorem \ref{thmmildpara}). In Section \ref{secdiscuss}, I discuss some differences and similarities between the proofs in this paper and my earlier work \cite{mildpara} and \cite{smoothpara}, and the original work by Cluckers, Pila and Wilkie \cite{unif}.

The main idea of the proofs, is to combine the Pre-parametrization Theorem \ref{prepara} with Proposition \ref{proppowersub}. More precisely, Proposition \ref{proppowersub}, yields a map $f \circ P_r: P_r^{-1}(U) \to \R$, that has the same image as $f$, but we also need a map $T \times (0,1)^m \to P_r^{-1}(U)$ such that the composition with this map remains $(Ar,1,0)$-mild up to order $r$. In the proofs below, we combine this process, and construct a map $T \times (0,1)^m \to U$, such that the composition with this map has the desired mildness property. From now on, we will work with cells $\cell$ instead of the more general domains $U$ of the previous sections.

\subsection{Proof of the first main result}
Firstly, we define a map $\phi^r: T \times (0,1)^m \to \cell$, where $\cell = \{\cell_t \mid t \in T\}$ is a family of open cells in $(0,1)^m$. Denote $\alpha_i$ and $\beta_i$ for the walls bounding $x_i$ from below and above respectively. For each $i \in \{1,\ldots,m\}$, denote $\pi_i$ for the projection $T \times (0,1)^m \to T \times (0,1)^i$ onto the first $i$ coordinates, and let $\pi_0$ be the projection onto $T$. The map $\phi^r$ is the composition $\phi^{r,m} \circ \ldots \circ \phi^{r,1}$, where for each $i \in \{1,\ldots,m\}$, $\phi^{r,i}$ is the map $\pi_{i-1}(\cell) \times (0,1)^{m-i+1} \to \pi_i(\cell) \times (0,1)^{m-i}$ defined by
\[
(t,x)  \mapsto (t,x_1,\ldots,x_{i-1},\alpha_i(t,x_1,\ldots,x_{i-1})+(\beta_i-\alpha_i)(t,x_1,\ldots,x_{i-1})x_i^r,x_{i+1},\ldots,x_m).
\]

Note that if $\cell = (0,1)^m$, then $\phi^r = P_r$, where $P_r$ is the power map of Proposition \ref{proppowersub}. 

Now suppose $\cell$ is obtained as a result of the Pre-parametrization Theorem \ref{prepara}. Assuming some lemmas that show that $\phi^r$, by its construction, inherits the properties of bounded monomial maps (or maps that are prepared in $x$), we will show the first main theorem. We will show these lemmas afterwards.

\begin{theorem} \label{thmcrpara}
Let $X$ be a power-subanalytic family of subsets of $(0,1)^n$ of dimension $m$. Then there exist constants $c = c(X) \in \N$ and $A = A(X) \in \N$ with the following property. For all $r \in \N$, there exist power-subanalytic maps $f_l: T \times (0,1)^m \to X$, for $1 \leq l \leq c$, such that for any $t \in T$, we have that
\[
\bigcup_{l=1}^c \im(f_{l,t}) = X_t,
\]
and for each $l \in \{1,\ldots,c\}$, the map $f_{l,t}$ is $(Ar,1,0)$-mild up to order $r$.
\end{theorem}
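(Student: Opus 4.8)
The plan is to combine the Pre-parametrization Theorem~\ref{prepara} with Proposition~\ref{proppowersub}, via the map $\phi^r$ defined above. By Theorem~\ref{prepara}, finitely many families of maps $f_l \colon \cell_l \to (0,1)^n$ cover $X$, each $f_l$ prepared in $x$ with associated bounded monomial map $b_l$ of bounded $C^1$-norm, and (crucially) with the walls of $\cell_l$ bounding $x_1,\dots,x_m$ of the same prepared form. The number $c$ of these families and the complexity of the $b_l$ and the walls depend only on $X$, not on $t$; this will yield the uniform constant $c = c(X)$. For each $l$, I would pick the associated map $\phi^r_l \colon T \times (0,1)^m \to \cell_l$ built from the walls $\alpha_i, \beta_i$ of $\cell_l$ as above, and set $g_{l} = f_l \circ \phi^r_l \colon T \times (0,1)^m \to X$. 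Since $\phi^r_l$ maps onto $\cell_l$, we get $\bigcup_l \im(g_{l,t}) = \bigcup_l \im(f_{l,t}) = X_t$, so property~(1) of the theorem statement is immediate. The real content is the mildness claim: each $g_{l,t}$ is $(Ar,1,0)$-mild up to order $r$ for a constant $A = A(X)$ independent of $t$ and $r$.

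For the mildness estimate, I would fix $t$ (and, as justified by Remarks~\ref{remlemmabmmap} and~\ref{remlemmaprepxmap}, suppress the $t$-variables) and analyze $b_{l,t} \circ \phi^r_{l,t}$ — reducing as in Lemma~\ref{lemmaprepxmap} to the case where $f_l$ is itself the bounded monomial $b_{l,j}$. Here $\phi^r_{l,t}$ is not simply $P_r$, as in Proposition~\ref{proppowersub}, but its $i$-th component is $\alpha_i + (\beta_i - \alpha_i)x_i^r$, where $\alpha_i, \beta_i$ are themselves prepared in $x_1,\dots,x_{i-1}$. So I would need the promised lemmas (stated and proved after the theorem) showing that the composition $\phi^{r,m} \circ \cdots \circ \phi^{r,1}$ still behaves like a bounded monomial/prepared-in-$x$ map, in the precise sense that for every $\nu$ with $|\nu| \le r$,
\[
\bigl| (\phi^r_{l,t})^{(\nu)}(x) \bigr| \le x^{-\nu} B A^{|\nu|} |\nu|!
\]
for constants $A, B$ depending only on $X$ — i.e. the same kind of bound as in Lemma~\ref{lemmabmmap}, but with the reparametrization by $x_i^r$ contributing a factor $r$ to $A$, exactly as in Proposition~\ref{proppowersub}. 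Granting such a lemma, I apply the Fa\`a di Bruno formula (Proposition~\ref{faa}) to $b_{l,t} \circ \phi^r_{l,t}$, bound $|b_{l,t}^{(\lambda)}|$ by Lemma~\ref{lemmabmmap} and the $\phi^r$-derivative products by the lemma just described, and then invoke Lemma~\ref{lemmaAB} to sum the whole expression. As in the proof of Proposition~\ref{proppowersub}, the negative powers of the smallest relevant $x_i$ that appear are killed by the factor $x_I^r$ extracted from the bounded-$C^1$-norm hypothesis on $b_l$ (this is where properties~(3) and~(4) of Theorem~\ref{prepara} are both essential: (4) ensures that $\beta_i - \alpha_i$ is also a bounded-$C^1$-norm object, so the walls do not spoil the argument), combined with the constraint $|\nu| \le r$. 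The upshot is a bound $|g_{l,t}^{(\nu)}(x)| \le B A^{|\nu|} r^{|\nu|} |\nu|!$; absorbing $B$ and choosing $A$ large enough over the finitely many $l$ gives $(Ar,1,0)$-mildness up to order $r$.

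Finally I would note uniformity: all the constants $M_{b_l}$, the $C^1$-bounds on $b_l$ and on the walls, and the mildness parameters $A_F, B_F$ of the units $F_l$ depend only on the fixed output of Theorem~\ref{prepara}, hence only on $X$; so $c$ and $A$ are genuinely $c(X)$ and $A(X)$, and the estimates hold uniformly in $t \in T$. I expect the main obstacle to be exactly the lemma on $\phi^r$: showing that an iterated composition of the affine-in-$x_i^r$ maps, whose coefficients $\alpha_i, \beta_i$ are themselves prepared in the earlier variables, admits a clean Lemma~\ref{lemmabmmap}-style derivative bound with the correct $x^{-\nu}$ weight and an $A$ linear in $r$. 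This requires carefully tracking how the inner prepared functions' own $x^{-\nu}$-type singularities interact with the substitution $x_i \mapsto x_i^r$ and with the subsequent differentiations coming from Fa\`a di Bruno, and verifying that the bounded-$C^1$-norm property is preserved (or at worst degrades in a controlled, $t$-independent way) under the composition $\phi^{r,m}\circ\cdots\circ\phi^{r,1}$.
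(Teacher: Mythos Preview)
Your overall architecture is exactly that of the paper: apply the Pre-parametrization Theorem, compose each $f_l$ with the wall-based map $\phi^r$, reduce to a bounded monomial $b$, expand $(b\circ\phi^r)^{(\nu)}$ by Fa\`a di Bruno, and close with Lemma~\ref{lemmaAB}. However, the single lemma you propose for $\phi^r$ --- namely $\bigl|(\phi^r_\ell)^{(\nu)}(x)\bigr|\le x^{-\nu}B(Ar)^{|\nu|}|\nu|!$ --- is not strong enough to make the argument go through. When you bound $|b^{(\lambda)}(\phi^r(x))|$ via Lemma~\ref{lemmabmmap} you pick up a factor $\phi^r(x)^{-\lambda}$, and since each $\phi^r_\ell(x)$ can be arbitrarily close to $0$ this factor is unbounded; it must be absorbed into the $\phi^r$-derivative product using $\sum_j k_j=\lambda$. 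For that you need the \emph{quotient} bound
\[
\left|\frac{(\phi^r_\ell)^{(\nu)}(x)}{\phi^r_\ell(x)}\right|\le x^{-\nu}B_\cell(A_\cell r)^{|\nu|}|\nu|!,
\]
which is the paper's Lemma~\ref{lemmaweaklymild} and is strictly stronger than what you wrote.

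There is a second subtlety you have glossed over. From the bounded $C^1$-norm of $b$ you do not extract ``the factor $x_I^r$'' directly: what you get (exactly as in (\ref{argumentC1})) is one spare factor $\phi^r_J(x)$ for some index $J$ with $\lambda_J\neq 0$. But the minimal coordinate you must compensate is $x_I$ with $\nu_I\neq 0$, and in general $I\neq J$ (unlike in Proposition~\ref{proppowersub}, where $\phi^r=P_r$ is diagonal and one can take $I=J$). The paper bridges this by first choosing $I$ with $x_I=\min\{x_i:\nu_i\neq 0\}$, then using $\sum_j|k_j|l_j=\nu$ to find $j,J$ with $k_{j,J}>0$ and $l_{j,I}>0$, and finally invoking a second lemma (Lemma~\ref{factorxr}): if $l_{j,I}\neq 0$ then $\bigl|(\phi^r_J)^{(l_j)}(x)\bigr|$ itself carries a factor $x_I^r$. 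Combining this one undivided derivative with the spare $\phi^r_J(x)$ yields the needed $x_I^r$ that kills $x^{-\nu}$. So you need \emph{two} lemmas on $\phi^r$, not one, and the index bookkeeping ($I$ versus $J$) is the heart of the matter.
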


\begin{proof}
Consider a map $f: \cell \to (0,1)^n$, obtained by applying the Pre-parametrization Theorem \ref{prepara} to $X$. We will show that $f \circ \phi^r$ is $(Ar,B,0)$-mild up to order $r$, for some $A,B>0$. Since the image of $f$ is contained in $(0,1)^n$, this suffices, since we can adjust $A$ to ensure that $B = 1$, if $B > 1$. Then we obtain a finite collection of $(Ar,1,0)$-mild maps up to order $r$ that parametrize $X$ as desired.

By abuse of notation, let $f$ be a component function of $f$, furthermore, arguing as in Lemma \ref{lemmaprepxmap}, it suffices to consider the case that $f$ is a bounded monomial map $b: \cell \to \R$, which has bounded $C^1$-norm (as a result of Theorem \ref{prepara}). By the form of $\phi^r$ and the since $b$ has bounded $C^1$-norm, we will ignore the $t$ variables, as we did before. 

We follow the proof of Proposition \ref{proppowersub}. Thus, after applying the Fa\`a di Bruno formula \ref{faa}, we want to find a suitable bound for
\begin{equation} \label{crmain1}
\left| b^{(\lambda)}(\phi^r(x)) \right| \prod_{j=1}^s \frac{ \left| \left( (\phi^r)^{(l_j)}(x) \right)^{k_j} \right| }{ k_j! (l_j!)^{|k_j|} }
\end{equation}
in order to apply Lemma \ref{lemmaAB}, where $x \in (0,1)^m$, $\lambda \in \N^m$, $1 \leq s \leq |\nu|$ and $k_1,\ldots,k_s$, $l_1,\ldots,l_s \in p_s(\nu,\lambda)$, coming from the Fa\`a di Bruno formula, are fixed, and where $\nu \in \N^m$ is the derivative of $b \circ \phi^r$ we are considering.

Let $1 \leq I \leq m$ be such that $x_I = \min\{ x_i \mid \nu_i \neq 0\}$. Since $\nu_I = \sum_{j=1}^s |k_j|l_{j,I}$, this implies that there exist $1 \leq j \leq s$ and $1 \leq J \leq m$ such that $k_{j,J}l_{j,I} \neq 0$. In particular, $k_{j,J} \neq 0$ and $l_{j,I} \neq 0$, which implies that $\lambda_J \neq 0$, since $\sum_{j=1}^s k_j = \lambda$. (Note that in the proof of Proposition \ref{proppowersub}, we could choose $I = J$, but that is not the case here.)

Similar to (\ref{argumentC1}) in the proof of Proposition \ref{proppowersub}, we have that
\[
\left| b^{(\lambda)}(\phi^r(x)) \right| \leq \phi^r_J(x) \phi^r(x)^{-\lambda} B_b M_b^{|\lambda|} |\lambda|!,
\]
where we have used that $b$ has bounded $C^1$-norm. We can now further bound (\ref{crmain1}) by:
\begin{equation} \label{crmain2}
B_b M_b^{|\lambda|} |\lambda|!  \phi^r_J(x) \prod_{j=1}^s \frac{ \left| \left( (\phi^r)^{(l_j)}(x) \right)^{k_j} \right| }{ \phi^r(x)^{k_j} } \frac{1}{ k_j! (l_j!)^{|k_j|} }.
\end{equation}
This finishes the first part of the proof. We are, as in Proposition \ref{proppowersub}, lead to finding an upper bound for
\[
\left| \frac{ (\phi^r_\ell)^{(l_j)}(x) }{ \phi^r_\ell(x) } \right|,
\]
where $1 \leq \ell \leq m$, which is the second part of the proof. By Lemma \ref{lemmaweaklymild}, we have that 
\[
\left| \frac{ (\phi^r_\ell)^{(l_j)}(x) }{ \phi^r_\ell(x) } \right| \leq x^{-l_j} B_\cell (A_\cell r)^{|l_j|} |l_j|!.
\]
If $\ell = J$, we have that $k_{j,J} > 0$. Instead of applying the above upper bound $k_{j,J}$ times, we will only do so $k_{j,J}-1$ times, and once use the fact that we have one factor $\phi^r_J(x)$ in front of the product in (\ref{crmain2}). Note that by the definition of $k_{j,J}$, $l_{j,I} \neq 0$. By Lemma \ref{factorxr}, we obtain that
\[
\left| \phi^r_J(x)^{(l_j)} \right| x_I^r x^{-l_j} B_\cell (A_\cell r)^{|l_j|} |l_j|!.
\]
Then we can further bound (\ref{crmain2}) as follows:
\begin{multline*}
B_b M_b^{|\lambda|} |\lambda|!  \phi^r_J(x) \prod_{j=1}^s \frac{ \left| \left( (\phi^r)^{(l_j)}(x) \right)^{k_j} \right| }{ \phi^r(x)^{k_j} } \frac{1}{ k_j! (l_j!)^{|k_j|} } \\ \leq B_b M_b^{|\lambda|} |\lambda|! x_I^r \prod_{j=1}^s \frac{ (x^{-l_j} B_\cell (A_\cell r)^{|l_j|} |l_j|!)^{|k_j|} }{ k_j! (l_j!)^{|k_j|} }.
\end{multline*}
Since $\sum_{j=1}^s |k_j|l_j = \nu$, and by our choice of $I$, we have that
\[
x_I^r x^{-\sum_{j=1}^s |k_j| l_j} \leq 1.
\]
Thus, we can conclude by Lemma \ref{lemmaAB}, since we have suitably bounded each term coming from the Fa\`a di Bruno formula.
\end{proof}

Now we show the two lemmas. Firstly, the claim on the quotient of derivatives of $\phi^r$ and $\phi^r$ itself. Note that if $\cell = (0,1)^m$, and thus the map $\phi^r$ equals the power map $P_r$ of Section \ref{secinterm}, then the statements are easy to see. The first lemma is also straightforward if one would replace $\phi^r$ by a bounded monomial map. Since $\phi^r$ is a composition of $m$ maps, we will use induction, of course.

\begin{lemma}\label{lemmaweaklymild}
Suppose that $\cell$ is a family of open cells in $(0,1)^m$ such that its walls are prepared in $x$. Then there exist $A_\cell,B_\cell > 0$ such that for any $t \in T$, $x \in (0,1)^m$ and $\nu \in \N^m$, we have that
\[
\left| \frac{ (\phi^r_{t,\ell})^{(\nu)}(x) }{  \phi^r_{t,\ell}(x) } \right| \leq x^{-\nu} B_\cell (A_\cell r)^{|\nu|} |\nu|!.
\]
for each component function $\phi^r_{t,\ell}$ of $\phi^r_{t}$.
\end{lemma}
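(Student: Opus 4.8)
I would argue by induction on $m$, using the decomposition $\phi^{r}=\phi^{r,m}\circ\psi$ with $\psi=\phi^{r,m-1}\circ\cdots\circ\phi^{r,1}$ (which acts as the identity in the last coordinate). As in the earlier lemmas I ignore the $t$-variables; one only needs that $\alpha_m,\beta_m$ are prepared in $x$ with data independent of $t$ and that the ranges of the associated bounded monomial maps are bounded independently of $t$ (Remark \ref{remdefbm}), so that all constants produced below are uniform in $t$. The base case $m=1$ is immediate: $\phi^{r}_1(x)=\alpha_1+(\beta_1-\alpha_1)x_1^{r}$ with $0\leq\alpha_1<\beta_1$, so $\phi^{r}_1(x)\geq(\beta_1-\alpha_1)x_1^{r}$, while for $\nu\neq 0$ one has $(\phi^{r}_1)^{(\nu)}(x)=(\beta_1-\alpha_1)(x_1^{r})^{(\nu)}$, which is at most $(\beta_1-\alpha_1)x_1^{r}\cdot r^{|\nu|}x^{-\nu}\leq\phi^{r}_1(x)\,x^{-\nu}r^{|\nu|}|\nu|!$. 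For the inductive step, if $\ell<m$ then $\phi^{r}_\ell$ does not involve the layer $\phi^{r,m}$ and equals the $\ell$-th component of the analogous map attached to the family of open cells $\pi_{m-1}(\cell)\subseteq(0,1)^{m-1}$, whose walls are $\alpha_1,\beta_1,\dots,\alpha_{m-1},\beta_{m-1}$, again prepared in $x$; the estimate for $\ell<m$ then follows from the induction hypothesis (the derivative vanishes when $\nu_m\neq 0$, so the bound is trivial in that case).

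The content is the case $\ell=m$. Writing $\Psi=(\phi^{r}_1,\dots,\phi^{r}_{m-1})$, we have
\[
\phi^{r}_m(x)=\alpha_m(\Psi(x))\bigl(1-x_m^{r}\bigr)+\beta_m(\Psi(x))\,x_m^{r}.
\]
The first step is to show that $\alpha_m\circ\Psi$ and $\beta_m\circ\Psi$ are weakly mild relative to their own values, say $|(\alpha_m\circ\Psi)^{(\nu)}(x)|\leq\alpha_m(\Psi(x))\,x^{-\nu}B'(A'r)^{|\nu|}|\nu|!$. Since $\alpha_m=b_jF(b)$ is prepared in $x$ and $F$ is non-vanishing on a neighbourhood of the compact set $\overline{\im(b)}$, hence $|F|\geq c>0$ on $\overline{\im(b)}$, Lemma \ref{lemmaprepxmap} gives $|\alpha_m^{(\lambda)}(y)|\leq y^{-\lambda}|b_j(y)|B_\alpha A_\alpha^{|\lambda|}|\lambda|!\leq y^{-\lambda}\alpha_m(y)(B_\alpha/c)A_\alpha^{|\lambda|}|\lambda|!$ (the case $\alpha_m\equiv 0$ being trivial). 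Applying the Fa\`a di Bruno formula to $\alpha_m\circ\Psi$, bounding $|\alpha_m^{(\lambda)}(\Psi(x))|$ by this estimate and $|(\Psi^{(l_j)}(x))^{k_j}|$ via the induction hypothesis applied to each $\phi^{r}_k$ ($k\leq m-1$), and using the relations $\sum_jk_j=\lambda$ and $\sum_j|k_j|l_j=\nu$ of Proposition \ref{faa} to obtain the cancellations $\Psi(x)^{-\lambda}\prod_j\Psi(x)^{k_j}=1$ and $\prod_jx^{-|k_j|l_j}=x^{-\nu}$, Lemma \ref{lemmaAB} produces the claimed bound; this step is a routine merge of the arguments of Lemma \ref{lemmaprepxmap} and Proposition \ref{proppowersub} with the inductive estimate on $\Psi$. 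One also records $|(x_m^{r})^{(\nu)}(x)|\leq r^{|\nu|}x_m^{r}x^{-\nu}$ for $\nu=\nu_me_m$ with $\nu_m\leq r$ (and $0$ otherwise), and $(1-x_m^{r})^{(\nu)}=-(x_m^{r})^{(\nu)}$ for $\nu\neq 0$.

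Then I would expand $(\phi^{r}_m)^{(\nu)}$ by the Leibniz rule. The part with $\nu_2=0$ is $(\alpha_m\circ\Psi)^{(\nu)}(1-x_m^{r})+(\beta_m\circ\Psi)^{(\nu)}x_m^{r}$, which by the two bounds above is at most $[\alpha_m(\Psi(x))(1-x_m^{r})+\beta_m(\Psi(x))x_m^{r}]x^{-\nu}B'(A'r)^{|\nu|}|\nu|!=\phi^{r}_m(x)\,x^{-\nu}B'(A'r)^{|\nu|}|\nu|!$. For the terms with $\nu_2\neq 0$, use $(1-x_m^{r})^{(\nu_2)}=-(x_m^{r})^{(\nu_2)}$ and estimate $|(\alpha_m\circ\Psi)^{(\nu_1)}(x)|\,|(x_m^{r})^{(\nu_2)}(x)|\leq[\alpha_m(\Psi(x))x_m^{r}]\,x^{-\nu}B'(A'r)^{|\nu_1|}r^{|\nu_2|}|\nu_1|!$, and similarly with $\beta_m$; the decisive point is that both $\alpha_m(\Psi(x))x_m^{r}\leq\phi^{r}_m(x)$ (because $\phi^{r}_m(x)\geq\alpha_m(\Psi(x))(1-x_m^{r})+\alpha_m(\Psi(x))x_m^{r}=\alpha_m(\Psi(x))$ since $\alpha_m\leq\beta_m$) and $\beta_m(\Psi(x))x_m^{r}\leq\phi^{r}_m(x)$, so these factors get absorbed into $\phi^{r}_m(x)$ on the right. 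Summing over $\nu_1+\nu_2=\nu$ and invoking the elementary inequality $\sum_{\nu_1+\nu_2=\nu}\binom{\nu}{\nu_1}(A'r)^{|\nu_1|}r^{|\nu_2|}|\nu_1|!\leq(2mA'r)^{|\nu|}|\nu|!$ (which follows from $\binom{\nu}{\nu_1}|\nu_1|!\leq m^{|\nu_1|}|\nu|!$ and from there being at most $2^{|\nu|}$ decompositions) yields $|(\phi^{r}_m)^{(\nu)}(x)|\leq\phi^{r}_m(x)\,x^{-\nu}B_\cell(A_\cell r)^{|\nu|}|\nu|!$ for suitable $A_\cell,B_\cell$; enlarging $A_\cell,B_\cell$ to also cover the finitely many cases $\ell<m$ completes the induction.

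The main obstacle is precisely this case $\ell=m$: unlike a bounded monomial or a map prepared in $x$, the function $\phi^{r}_m$ is a genuine \emph{sum} $\alpha_m(\Psi)(1-x_m^{r})+\beta_m(\Psi)x_m^{r}$, and differentiating produces mixed terms carrying factors $\alpha_m(\Psi)x_m^{r}$ or $\beta_m(\Psi)x_m^{r}$ that, on their own, one cannot compare to any $\phi^{r}_m(x)$-free quantity (note that $\beta_m-\alpha_m$ need not be bounded away from $0$). What rescues the estimate is the trivial observation that each of these ``halves'' is pointwise dominated by the whole $\phi^{r}_m(x)$, using only $0\leq x_m^{r}\leq 1$ and $\alpha_m\leq\beta_m$. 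Everything else — the Fa\`a di Bruno estimate showing $\alpha_m\circ\Psi,\beta_m\circ\Psi$ are weakly mild, and the combinatorial sum — is routine given Lemma \ref{lemmaAB}, provided one checks that all constants depend only on $\alpha_m,\beta_m,m$ and the inductive constants, hence are uniform in $t$.
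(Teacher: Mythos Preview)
Your argument is correct and follows essentially the same route as the paper: induction on $m$, reducing the case $\ell=m$ to the ``weakly mild'' estimates for $\alpha_m\circ\Psi$ and $\beta_m\circ\Psi$ via the elementary pointwise inequalities $\alpha_m\leq\phi^r_m$ and $\beta_m x_m^{r}\leq\phi^r_m$, and then handling those compositions by Fa\`a di Bruno combined with Lemma~\ref{lemmaprepxmap}, the induction hypothesis, and Lemma~\ref{lemmaAB}. The only cosmetic difference is that you rewrite $\phi^r_m=\alpha_m(1-x_m^{r})+\beta_m x_m^{r}$ and run a Leibniz expansion, whereas the paper keeps the form $\alpha_m+(\beta_m-\alpha_m)x_m^{r}$ and differentiates directly (note the variables separate, so your Leibniz sum actually has only $\nu_m+1$ nonzero terms); the resulting groupings of terms differ slightly but are equivalent.
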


\begin{proof}
We will prove this lemma via induction on $m$. The base case $m = 1$ is easy to verify (one might as well take $m = 0$ as a base case). Let us suppose that the result is true for values up to $m-1$ for some $m>1$. We will ignore the $t$ variables. Let $x \in (0,1)^m$ and $\nu \in \N^m$. By the induction hypothesis, it suffices to consider the case $\ell = m$. By the definition of $\phi^r$, we have that
\begin{equation} \label{eqderwallmap}
(\phi^r_m)^{(\nu)}(x) = (\alpha_m \circ  \overline{\phi^r_m})^{(\nu)}(x) + \left((\beta_m - \alpha_m) \circ \overline{\phi^r_m} \right)^{(\bar{\nu})}(x) r\cdots (r-\nu_m+1)x_m^{r-\nu_m},
\end{equation}
where $\bar{\nu} = (\nu_1,\ldots,\nu_{m-1})$ and $\overline{\phi^r_m}(x) = (\phi^r_1(x),\ldots,\phi^r_{m-1}(x))$. Therefore, we find that
\begin{align*}
\frac{ \left| (\phi^r_m)^{(\nu)}(x) \right| }{ \phi^r_m(x) } \leq &\frac{ \left|(\alpha_m \circ \overline{\phi^r_m})^{(\bar{\nu})}(x)\right| + \left|((\beta_m - \alpha_m) \circ \overline{\phi^r_m})^{(\bar{\nu})}(x) \right| r^{\nu_m} \nu_m! x_m^{r-\nu_m} }{ (\alpha_m \circ \overline{\phi^r_m})(x) + ((\beta_m - \alpha_m)\circ \overline{\phi^r_m})(x)x_m^r } \\
\leq &\frac{ \left| (\alpha_m \circ \overline{\phi^r_m})^{(\bar{\nu})}(x) \right| }{ (\alpha_m \circ \overline{\phi^r_m})(x) } + \frac{ \left| (\beta_m \circ \overline{\phi^r_m})^{(\bar{\nu})}(x) \right| x_m^r}{ (\beta_m \circ \overline{\phi^r_m})(x)x_m^r }r^{\nu_m}\nu_m!x_m^{-\nu_m} \\
&\qquad+ \frac{ \left| (\alpha_m \circ \overline{\phi^r_m})^{(\bar{\nu})}(x) \right| x_m^r}{ (\alpha_m \circ \overline{\phi^r_m})(x)x_m^r }r^{\nu_m}\nu_m!x_m^{-\nu_m} \\
\leq &\left(2\frac{ \left|\ (\alpha_m \circ \overline{\phi^r_m})^{(\bar{\nu})}(x) \right| }{ (\alpha_m \circ \overline{\phi^r_m})(x) } + \frac{ \left|\ (\beta_m \circ \overline{\phi^r_m})^{(\bar{\nu})}(x) \right| }{ (\beta_m \circ \overline{\phi^r_m})(x) } \right)r^{\nu_m}\nu_m!x_m^{-\nu_m}. \\
\end{align*}
Therefore, our statement about $\phi^r$ has been reduced to a statement about $\alpha_m$ and $\beta_m$. Note that the map $(\phi^r_1,\ldots,\phi^r_{m-1})$ is the map $\phi^r$ for the cell $\pi_{m-1}(\cell)$. Using the Fa\`a di Bruno formula \ref{faa} and lemmas \ref{lemmabmmap} and \ref{lemmaprepxmap}, one reduces the statement on $\alpha_m$ and $\beta_m$ to the map $(\phi^r_1,\ldots,\phi^r_{m-1})$, and concludes by the induction hypothesis.
\end{proof}

Finally, we show the second lemma.

\begin{lemma}\label{factorxr}
Suppose that $\cell$ is a family of open cells in $(0,1)^m$ such that its walls are prepared in $x$. Let $1 \leq I \leq m$ and suppose that $\nu \in \N^m$ such that $\nu_I \neq 0$. Then there are $A_\cell,B_\cell>0$ such that for any $t \in T$ and $x \in (0,1)^m$, we have that
\[
\left| (\phi^r_{t,\ell})^{(\nu)}(x) \right| \leq x_I^r x^{-\nu} B_\cell (A_\cell r)^{|\nu|} |\nu|!
\]
for any component function $\phi^r_{t,\ell}$ of $\phi^r_t$.
\end{lemma}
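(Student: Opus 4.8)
The plan is to prove this by induction on the number of variables $m$, running parallel to (and using) the proof of Lemma~\ref{lemmaweaklymild}; as there, we ignore the $t$-variables and abbreviate $\phi^r = \phi^r_t$. First reduce to the case $\ell = m$: if $\nu_I \neq 0$ with $I > \ell$ then $(\phi^r_\ell)^{(\nu)}\equiv 0$ because $\phi^r_\ell$ only involves $x_1,\ldots,x_\ell$, while if $\nu$ is supported in the first $\ell$ coordinates then $(\phi^r_1,\ldots,\phi^r_\ell)$ is the map $\phi^r$ of the subcell $\pi_\ell(\cell)$, so the statement for $\phi^r_\ell$ in dimension $m$ follows from the top-component statement in dimension $\ell$. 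For $m = 1$ the walls $\alpha_1,\beta_1$ are constants, $\phi^r_1(x_1) = \alpha_1+(\beta_1-\alpha_1)x_1^r$, and for $\nu_1\ge 1$ one has $(\phi^r_1)^{(\nu_1)}(x_1) = (\beta_1-\alpha_1)\,r(r-1)\cdots(r-\nu_1+1)\,x_1^{r-\nu_1}$, which is $0$ for $\nu_1 > r$ and otherwise has absolute value at most $x_1^r x_1^{-\nu_1}|\beta_1-\alpha_1|\,r^{\nu_1}$, as required.

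For the inductive step, use the identity \eqref{eqderwallmap} from the proof of Lemma~\ref{lemmaweaklymild}, which writes $(\phi^r_m)^{(\nu)}$ as $(\alpha_m\circ\overline{\phi^r_m})^{(\nu)}$ plus $((\beta_m-\alpha_m)\circ\overline{\phi^r_m})^{(\bar\nu)}\cdot r(r-1)\cdots(r-\nu_m+1)\cdot x_m^{r-\nu_m}$, where $\overline{\phi^r_m}=(\phi^r_1,\ldots,\phi^r_{m-1})$ is the map $\phi^r$ of $\pi_{m-1}(\cell)$. If $I = m$ then $\nu_m\ge 1$, the first term vanishes, $x_m^{r-\nu_m}=x_m^r x_m^{-\nu_m}$ (and the whole thing vanishes once $\nu_m > r$), the falling factorial is at most $r^{\nu_m}$, and it remains to bound $|((\beta_m-\alpha_m)\circ\overline{\phi^r_m})^{(\bar\nu)}(x)|$ by $(x')^{-\bar\nu}B(Ar)^{|\bar\nu|}|\bar\nu|!$; this is exactly the bound already established in the proof of Lemma~\ref{lemmaweaklymild} (Fa\`a di Bruno with Lemmas~\ref{lemmabmmap}, \ref{lemmaprepxmap}, \ref{lemmaAB} and the inductive hypothesis applied to $\overline{\phi^r_m}$, using that the walls take values in $(0,1)$). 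If $I < m$, then $\bar\nu_I = \nu_I \neq 0$, and after the same reductions (falling factorial $\le r^{\nu_m}$, $x_m^{r-\nu_m}\le x_m^{-\nu_m}$) everything is reduced to the following claim: for a wall $w$ of $x_m$ and any $\bar\nu\in\N^{m-1}$ with $\bar\nu_I\neq 0$, one has $|(w\circ\overline{\phi^r_m})^{(\bar\nu)}(x)| \le x_I^r (x')^{-\bar\nu}B(Ar)^{|\bar\nu|}|\bar\nu|!$.

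To prove this claim I would mimic the argument in the proof of Theorem~\ref{thmcrpara}. By Remark~\ref{remlemmaprepxmap} it suffices to take $w$ equal to its associated bounded monomial map $b$, which by property~(4) of Theorem~\ref{prepara} has bounded $C^1$-norm. Apply Fa\`a di Bruno to $b\circ\overline{\phi^r_m}$; in a fixed term with data $\lambda,s,k_1,\ldots,k_s,l_1,\ldots,l_s\in p_s(\bar\nu,\lambda)$, the relation $\sum_j |k_j| l_j = \bar\nu$ and $\bar\nu_I\neq 0$ force indices $j_0,\ell_0$ with $l_{j_0,I}\neq 0$ and $k_{j_0,\ell_0}\neq 0$, hence $\lambda_{\ell_0}\ge 1$. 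Exactly as in Theorem~\ref{thmcrpara}, the bounded $C^1$-norm of $b$ lets one peel one derivative off the $\ell_0$-th slot and bound $|b^{(\lambda)}(\overline{\phi^r_m}(x))|$ by $\phi^r_{\ell_0}(x)\,\overline{\phi^r_m}(x)^{-\lambda}\,M^{|\lambda|}|\lambda|!\,B$. Now spend the front factor $\phi^r_{\ell_0}(x)$ on the $(\phi^r_{\ell_0})^{(l_{j_0})}$-factor: $\phi^r_{\ell_0}(x)\bigl(|(\phi^r_{\ell_0})^{(l_{j_0})}(x)|/\phi^r_{\ell_0}(x)\bigr)^{k_{j_0,\ell_0}} = |(\phi^r_{\ell_0})^{(l_{j_0})}(x)|\cdot\bigl(|(\phi^r_{\ell_0})^{(l_{j_0})}(x)|/\phi^r_{\ell_0}(x)\bigr)^{k_{j_0,\ell_0}-1}$; bound the bare factor by the inductive hypothesis of the present lemma (valid since $\ell_0\le m-1$ and $l_{j_0,I}\neq 0$ with $I\le\ell_0$ — otherwise that derivative, hence the whole term, is $0$), which injects the factor $x_I^r$, and bound every other derivative-of-$\phi^r_\ell$ factor by Lemma~\ref{lemmaweaklymild}, keeping its $\phi^r_\ell(x)$-weight so that the product of these weights cancels $\overline{\phi^r_m}(x)^{-\lambda}$. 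Since $x_I^r\le 1$, the $k_{j_0,\ell_0}$-th power leaves a single surviving $x_I^r$; collecting $(x')$-powers via $\sum_j|k_j|l_j=\bar\nu$ and summing over all Fa\`a di Bruno data, Lemma~\ref{lemmaAB} assembles everything into the asserted bound, with constants depending only on $\cell$ after taking maxima over the finitely many walls and indices. The combination $\beta_m-\alpha_m$ is handled by splitting the difference and treating $\alpha_m$ and $\beta_m$ separately.

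The main obstacle is exactly this last bookkeeping: one must place the front factor $\phi^r_{\ell_0}(x)$ — available because the walls produced by Theorem~\ref{prepara} have bounded $C^1$-norm — so that it simultaneously (i) is absorbed, together with $k_{j_0,\ell_0}-1$ copies of the Lemma~\ref{lemmaweaklymild}-ratio, into one application of the inductive hypothesis of this lemma (which carries no $\phi^r_{\ell_0}(x)$-weight but does supply the needed $x_I^r$), and (ii) still permits the weight cancellation against $\overline{\phi^r_m}(x)^{-\lambda}$ required to apply Lemma~\ref{lemmaAB} — the same delicate maneuver already carried out in the proof of Theorem~\ref{thmcrpara}. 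The case split on whether the monomial $x_m^r$ is differentiated ($I = m$, where $x_m^r$ is present by construction and nothing must be extracted) or not ($I < m$), together with the trivial estimates for the falling factorial and for $x_m^{r-\nu_m}$, is comparatively routine.
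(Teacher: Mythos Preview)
Your argument follows the paper's inductive skeleton exactly: induction on $m$, reduction to $\ell=m$, and the split into $I=m$ (where the factor $x_m^r$ appears directly from \eqref{eqderwallmap}) versus $I<m$ (where one reduces to the walls composed with $\overline{\phi^r_m}$). For the case $I<m$ the paper simply writes ``the same arguments as in the end of Lemma~\ref{lemmaweaklymild} apply here, i.e., we can use the induction hypothesis now,'' while you actually carry out the Fa\`a di Bruno estimate on $w\circ\overline{\phi^r_m}$ and, crucially, invoke the bounded $C^1$-norm of the wall's associated bounded monomial (property~(4) of Theorem~\ref{prepara}) to produce the spare factor $\phi^r_{\ell_0}(x)$ via the \eqref{argumentC1} trick, exactly as in the proof of Theorem~\ref{thmcrpara}.

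This extra care is not merely cosmetic. The lemma as literally stated --- assuming only that the walls are prepared in $x$ --- is false: take $m=2$ with $\alpha_1=0$, $\beta_1=1$, $\alpha_2(y_1)=y_1^{1/2}$, $\beta_2=1$ (all prepared in $x$, but $\alpha_2$ with unbounded $C^1$-norm). Then $\phi^r_2(x)=x_1^{r/2}+(1-x_1^{r/2})x_2^r$, so $(\phi^r_2)^{(1,0)}(x)=(r/2)\,x_1^{r/2-1}(1-x_2^r)$, which cannot be bounded by $x_1^{r-1}B(Ar)$ as $x_1\to 0$. The hypothesis you import is therefore genuinely needed; since Lemma~\ref{factorxr} is only ever applied to cells coming from Theorem~\ref{prepara}, where property~(4) supplies exactly this, the main results of the paper are unaffected. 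Your write-up is thus correct for the lemma as it is actually used, and makes explicit what the paper's sketch leaves implicit.
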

\begin{proof}
We will also prove this statement by induction on $m$. We will ignore the $t$ variables. Note that without the additional factor $x_I^r$ in the upper bound, the statement follows from the proof of Lemma \ref{lemmaweaklymild}. The claim is that if we do not divide by $\phi^r_\ell(x)$, and we know that $\nu_I \neq 0$, then we can obtain the extra factor $x_I^r$. By the induction hypothesis, we only have to consider the case $\ell = m$. Suppose first that $I = m$. As in (\ref{eqderwallmap}) of Lemma \ref{lemmaweaklymild}, we have that
\[
(\phi^r_m)^{(\nu)}(x) =\left((\beta_m - \alpha_m)\circ \overline{\phi^r_m} \right)^{(\bar{\nu})}(x)r\cdots (r-\nu_m+1)x_m^{r-\nu_m},
\]
where $\bar{\nu} = (\nu_1,\ldots,\nu_{m-1})$ and $\overline{\phi^r_m}(x) = (\phi^r_1(x),\ldots,\phi^r_{m-1}(x))$. Note that the first term vanished here since $\nu_I = \nu_m \neq 0$. Thus, we find a factor $x_I^r$, as desired. The remaining expression is bounded similarly as in the proof of Lemma \ref{lemmaweaklymild}.

Now suppose $I < m$. From the proof of Lemma \ref{lemmaweaklymild}, we deduce that
\[
\left| (\phi^r_m)^{(\nu)}(x) \right|  \leq (2\left|\ (\alpha_m \circ \overline{\phi^r_m})^{(\bar{\nu})}(x) \right| + \left|\ (\beta_m \circ \overline{\phi^r_m})^{(\bar{\nu})}(x) \right|)r^{\nu_m}\nu_m!x_m^{-\nu_m}.
\]
Since $I < m$, we have that $\bar{\nu}_I \neq 0$. Thus, our statement about $\phi^r$ is reduced to a statement about the walls of $\cell$. The same arguments as in the end of Lemma \ref{lemmaweaklymild} apply here, i.e., we can use the induction hypothesis now.
\end{proof}

\subsection{Proof of the second main result}
Let $\cell = \{\cell_t \mid t \in T\}$ be a family of open cells in $(0,1)^m$. To obtain mildness up to order $+\infty$, we will use the map $\phinf: T \times (0,1)^m \to \cell$, which is the composition of the maps $\phi^{\infty,m} \circ \ldots \phi^{\infty,1}$, where for each $i \in \{1,\ldots,m\}$, $\phi^{\infty,i}$ is the map $\pi_{i-1}(\cell) \times (0,1)^{m-i+1} \to \pi_i(\cell) \times (0,1)^{m-i}$ defined by
\[
(t,x) \mapsto (t, x_1 , \ldots, x_{i-1}, \alpha_i(t,x_1,\ldots,x_{i-1}) + (\beta_i - \alpha_i)(t,x_1,\ldots,x_{i-1}) e^{1-1/x_i^\kappa}, x_{i+1}, \ldots, x_m),
\]
where $\kappa$ is a strictly positive real number. It is shown in \cite{mildpara} that the function $e^{1-1/x^\kappa}$ is $(c(\kappa),e,1/\kappa)$-mild, where $c(\kappa) = 6\kappa$ if $\kappa \geq 1$ and $c(\kappa) = 3(2/\kappa)^{1/\kappa}$ if $\kappa \leq 1$. This function, for $\kappa = 1$, also appears in \cite{countpointsexp} and \cite{butler} in the parametrization of some particular sets in $\R^3$, and in \cite{ccs} for the same purpose as this paper.

The proof strategy of the second main theorem is exactly the same as the first main theorem. Therefore, we will keep it a bit shorter here. First, we will state the adapted version of lemmas \ref{lemmaweaklymild} and \ref{factorxr}.

\begin{lemma}\label{lemmaweaklymild2}
Suppose that $\cell$ is a family of open cells in $(0,1)^m$ such that its walls are prepared in $x$. Then there exist $A_\cell,B_\cell > 0$ such that for any $t \in T$, $x \in (0,1)^m$ and $\nu \in \N^m$, we have that
\[
\left| \frac{ (\phinf_{t,\ell})^{(\nu)}(x) }{  \phinf_{t,\ell}(x) } \right| \leq x^{-(\kappa+1)\nu} B_\cell (c(\kappa) A_\cell)^{|\nu|} |\nu|!.
\]
for each component function $\phinf_{t,\ell}$ of $\phinf_{t}$, where $c(\kappa) = \kappa$ if $\kappa \geq 1$, and $c(\kappa) = 1$ if $0 <\kappa \leq 1$.
\end{lemma}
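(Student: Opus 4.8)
The proof will follow the same inductive structure as Lemma \ref{lemmaweaklymild}, replacing the role of the power function $x_i^r$ with the exponential function $e^{1-1/x_i^\kappa}$ and replacing the bound on its logarithmic derivatives accordingly. The plan is to induct on $m$. The base case $m=1$ (or $m=0$) reduces to the known mildness estimate for $e^{1-1/x^\kappa}$, namely that it is $(c(\kappa),e,1/\kappa)$-mild; more precisely, I would establish that the logarithmic derivative $\left|(e^{1-1/x^\kappa})^{(k)}/e^{1-1/x^\kappa}\right|$ is bounded by $x^{-(\kappa+1)k}B(c(\kappa))^k k!$ for a suitable constant $B$, which is essentially contained in the computations of \cite{mildpara}. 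The extra factor $x^{-(\kappa+1)\nu}$ compared with the $x^{-\nu}$ of Lemma \ref{lemmaweaklymild} is exactly what accounts for the singularity of $1/x^\kappa$ at the origin: each derivative of $1/x^\kappa$ loses a further power $x^{-(\kappa+1)}$ rather than $x^{-1}$.

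For the inductive step, assume the result for cells in dimension up to $m-1$. As in Lemma \ref{lemmaweaklymild}, by the induction hypothesis it suffices to treat the last component $\ell = m$. Writing $\overline{\phinf_m}(x) = (\phinf_1(x),\dots,\phinf_{m-1}(x))$ and $\bar\nu = (\nu_1,\dots,\nu_{m-1})$, I would expand $(\phinf_m)^{(\nu)}$ via the product rule in analogy with equation \eqref{eqderwallmap}: the derivative splits into a term coming from $\alpha_m\circ\overline{\phinf_m}$ and a term that is a product of a derivative of $(\beta_m-\alpha_m)\circ\overline{\phinf_m}$ with a derivative (in $x_m$) of $e^{1-1/x_m^\kappa}$. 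Dividing by $\phinf_m(x) = \alpha_m\circ\overline{\phinf_m}(x) + (\beta_m-\alpha_m)\circ\overline{\phinf_m}(x)\,e^{1-1/x_m^\kappa}$ and using that both $\alpha_m\circ\overline{\phinf_m}$ and $\beta_m\circ\overline{\phinf_m}$ are positive, one bounds the quotient by a sum of logarithmic derivatives of $\alpha_m\circ\overline{\phinf_m}$, of $\beta_m\circ\overline{\phinf_m}$, and of $e^{1-1/x_m^\kappa}$ — exactly the same maneuver as in the three-line display in the proof of Lemma \ref{lemmaweaklymild}, with $x_m^r$ replaced by $e^{1-1/x_m^\kappa}$ and $r^{\nu_m}\nu_m!x_m^{-\nu_m}$ replaced by the bound $x_m^{-(\kappa+1)\nu_m}B(c(\kappa))^{\nu_m}\nu_m!$ from the base case.

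It then remains to control the logarithmic derivatives of $\alpha_m\circ\overline{\phinf_m}$ and $\beta_m\circ\overline{\phinf_m}$. Here I would use the Fa\`a di Bruno formula (Proposition \ref{faa}) together with lemmas \ref{lemmabmmap} and \ref{lemmaprepxmap}: since the walls $\alpha_m,\beta_m$ are prepared in $x$, their derivatives are bounded as in Lemma \ref{lemmaprepxmap}, and composing with $\overline{\phinf_m}$ — which is precisely the map $\phinf$ for the cell $\pi_{m-1}(\cell)$ — lets me invoke the induction hypothesis for its logarithmic derivatives. Feeding these into Lemma \ref{lemmaAB} (as in the proof of Lemma \ref{lemmaprepxmap}, being careful to track the negative powers of $x$ coming from the $-\sum|k_j|l_j$ relation in Fa\`a di Bruno) yields a bound of the desired shape $x^{-(\kappa+1)\bar\nu}B_\cell(c(\kappa)A_\cell)^{|\bar\nu|}|\bar\nu|!$, and combining with the $x_m$-part completes the estimate for $\ell=m$. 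The main obstacle, as in the $C^r$ case, is purely bookkeeping: making sure the constants $A_\cell,B_\cell$ can be chosen uniformly (independent of $t$ and of the particular derivative), and that the various exponents $-(\kappa+1)\nu$ combine correctly through the composition; the appearance of $c(\kappa)$ rather than $c(\kappa)^2$ or worse in the final bound is what one has to be attentive to, and it comes from the fact that in Lemma \ref{lemmaAB} the ``$A_1$'' slot (here governed by $M$ of the walls, a constant independent of $\kappa$) does not multiply the ``$A_2$'' slot (here $c(\kappa)$) in a way that would produce a power of $c(\kappa)$ larger than one.
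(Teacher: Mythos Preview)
Your proposal is correct and follows essentially the same approach as the paper. The paper's proof is simply more condensed: it explicitly derives the key logarithmic-derivative bound for $e^{1-1/x^\kappa}$ by applying Fa\`a di Bruno to the composition $e^y \circ (1-1/x^\kappa)$ together with Lemma \ref{lemmaAB}, obtaining $\bigl|(e^{1-1/x^\kappa})^{(\nu)}\bigr| \leq x^{-(\kappa+1)\nu} e^{1-1/x^\kappa}(2c(\kappa))^{\nu}\nu!$, and then says in one line ``one repeats the arguments of Lemma \ref{lemmaweaklymild}'' --- which is precisely the induction you have written out.
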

\begin{proof}
Using the Fa\`a di Bruno formula \ref{faa} to the composition of $e^x$ with $1-1/x^\kappa$ (see also the proof of \cite[Proposition 3.4]{mildpara}), one finds that that for any $x \in (0,1)$ and $\nu \in \N$:
\begin{align*}
\left| \left( e^{1-1/x} \right)^{(\nu)} \right| &\leq x^{-(\kappa+1)\nu} \sum_{1 \leq \lambda \leq \nu} e^{1-1/x^\kappa} \sum_{s=1}^\nu \sum_{p_s(\nu,\lambda)} \nu! \prod_{j=1}^s \frac{ (c(\kappa)^{l_j} l_j!)^{k_j} }{ k_j! (l_j)!^{k_j} } \\
&\leq x^{-(\kappa+1)\nu} e^{1-1/x^\kappa} \sum_{1 \leq \lambda \leq \nu} \sum_{s=1}^\nu \sum_{p_s(\nu,\lambda)} \nu! \prod_{j=1}^s \frac{ (c(\kappa)^{l_j} l_j!)^{k_j} }{ k_j! (l_j)!^{k_j} },
\end{align*}
where $c(\kappa)$ is $\kappa$ if $\kappa \geq 1$ and is $1$ if $\kappa \leq 1$ (see \cite[Lemma 3.2]{mildpara}) . Applying Lemma \ref{lemmaAB}, we obtain
\[
\left| \left( e^{1-1/x} \right)^{(\nu)} \right| \leq x^{-(\kappa+1)\nu} e^{1-1/x^\kappa} (2c(\kappa))^{\nu} \nu!.
\]
Using this upper bound, one repeats the arguments of Lemma \ref{lemmaweaklymild} to obtain the result.
\end{proof}

We see that (obviously) the function $e^{1-1/x^\kappa}$ appears in the upper bound on its derivatives. This observation, just as in Lemma \ref{factorxr}, leads to the second lemma. The proof is also similar, so we omit it.

\begin{lemma}\label{factorexp}
Suppose that $\cell$ is a family of open cells in $(0,1)^m$ such that its walls are prepared in $x$. Let $1 \leq I \leq m$ and suppose that $\nu \in \N^m$ such that $\nu_I \neq 0$. Then there are $A_\cell,B_\cell>0$ such that for any $t \in T$ and $x \in (0,1)^m$, we have that
\[
\left| (\phinf_{t,\ell})^{(\nu)}(x) \right| \leq e^{1-1/x_I^\kappa} x^{-(\kappa+1)\nu} B_\cell (c(\kappa) A_\cell)^{|\nu|} |\nu|!
\]
for any component function $\phinf_{t,\ell}$ of $\phinf_t$, where $c(\kappa) = \kappa$ if $\kappa \geq 1$, and $c(\kappa) = 1$ if $0 <\kappa \leq 1$.
\end{lemma}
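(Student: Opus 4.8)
The plan is to imitate the proof of Lemma \ref{factorxr} essentially line by line, with the power map $\phi^r$ replaced by $\phinf$ and the derivative estimate of Lemma \ref{lemmaweaklymild} replaced by that of Lemma \ref{lemmaweaklymild2}. As there, I would argue by induction on $m$ (with the trivial base case $m \leq 1$) and ignore the $t$ variables throughout. The guiding observation is that the estimate \emph{without} the extra factor $e^{1-1/x_I^\kappa}$ is already contained in (the proof of) Lemma \ref{lemmaweaklymild2} — one merely multiplies through by $\phinf_\ell(x) \leq 1$ — so the whole point is to harvest this one exponential factor out of the hypothesis $\nu_I \neq 0$. By the induction hypothesis applied to the cell $\pi_{m-1}(\cell)$, it then suffices to treat the last component $\ell = m$.

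For $\ell = m$ I would differentiate
\[
\phinf_m(x) = (\alpha_m \circ \overline{\phinf_m})(x) + \big((\beta_m - \alpha_m) \circ \overline{\phinf_m}\big)(x)\, e^{1 - 1/x_m^\kappa},
\]
where $\overline{\phinf_m} = (\phinf_1, \ldots, \phinf_{m-1})$ is the map $\phinf$ associated with $\pi_{m-1}(\cell)$, and split into two cases exactly as in Lemma \ref{factorxr}. If $I = m$, then since $\alpha_m \circ \overline{\phinf_m}$ and $(\beta_m - \alpha_m) \circ \overline{\phinf_m}$ do not depend on $x_m$ and $\nu_m = \nu_I \neq 0$, the only surviving term of the Leibniz expansion is
\[
(\phinf_m)^{(\nu)}(x) = \big((\beta_m - \alpha_m) \circ \overline{\phinf_m}\big)^{(\bar{\nu})}(x)\, \big(e^{1 - 1/x_m^\kappa}\big)^{(\nu_m)},
\]
with $\bar{\nu} = (\nu_1, \ldots, \nu_{m-1})$. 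The estimate $\big|\big(e^{1-1/x_m^\kappa}\big)^{(\nu_m)}\big| \leq x_m^{-(\kappa+1)\nu_m}\, e^{1-1/x_m^\kappa}\, (2c(\kappa))^{\nu_m}\, \nu_m!$, obtained at the start of the proof of Lemma \ref{lemmaweaklymild2}, supplies the factor $e^{1-1/x_I^\kappa}$ directly, and the remaining factor $\big((\beta_m - \alpha_m)\circ\overline{\phinf_m}\big)^{(\bar{\nu})}$ is controlled, via Fa\`a di Bruno (Proposition \ref{faa}) together with lemmas \ref{lemmabmmap} and \ref{lemmaprepxmap}, in terms of the derivatives of $(\phinf_1, \ldots, \phinf_{m-1})$, which are handled by Lemma \ref{lemmaweaklymild2}. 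If $I < m$, I would instead invoke the intermediate estimate obtained inside the proof of Lemma \ref{lemmaweaklymild2},
\[
\big|(\phinf_m)^{(\nu)}(x)\big| \leq \Big(2\big|(\alpha_m \circ \overline{\phinf_m})^{(\bar{\nu})}(x)\big| + \big|(\beta_m \circ \overline{\phinf_m})^{(\bar{\nu})}(x)\big|\Big)\, x_m^{-(\kappa+1)\nu_m}\,(2c(\kappa))^{\nu_m}\,\nu_m!,
\]
in which now $\bar{\nu}_I \neq 0$ because $I \leq m-1$; once more Fa\`a di Bruno and lemmas \ref{lemmabmmap}, \ref{lemmaprepxmap} reduce the estimate to one on derivatives of $(\phinf_1, \ldots, \phinf_{m-1})$ with a multi-index whose $I$-th entry is nonzero, and the induction hypothesis for $m-1$ produces the factor $e^{1-1/x_I^\kappa}$. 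Absorbing the stray factors $2$, the $C^1$-bounds of the walls, and the mildness parameters $c(\kappa), e$ of $e^{1-1/x^\kappa}$ into new constants $A_\cell, B_\cell$ closes the induction.

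I do not expect a genuinely new obstacle here beyond the one already present in Lemma \ref{factorxr}: the trick of harvesting the extra factor is identical, with $e^{1-1/x_I^\kappa}$ now playing the role of $x_I^r$. The one thing requiring care is the exponent bookkeeping — the weight $x^{-(\kappa+1)\nu}$ rather than $x^{-\nu}$ — and in particular checking that when $\beta_m \circ \overline{\phinf_m}$ is expanded by Fa\`a di Bruno the relation $\sum_j |k_j|\, l_j = \bar{\nu}$ still routes the exponents through correctly, so that no coordinate $x_i$ ends up with too negative a power; this is routine, but it is the place where an error would most easily slip in.
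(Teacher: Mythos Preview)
Your proposal is correct and follows exactly the approach the paper intends: the paper in fact omits the proof entirely, writing only that ``the proof is also similar [to Lemma \ref{factorxr}], so we omit it.'' Your write-up spells out precisely the adaptation the author has in mind---induction on $m$, reduction to $\ell=m$, the $I=m$ versus $I<m$ case split, harvesting $e^{1-1/x_I^\kappa}$ from the surviving Leibniz term, and reducing the wall compositions to $\overline{\phinf_m}$ via Fa\`a di Bruno and lemmas \ref{lemmabmmap}, \ref{lemmaprepxmap}---so there is nothing to add.
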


We can now prove the second main theorem.

\begin{theorem} \label{thmmildpara}
Let $X$ be a power-subanalytic family of subsets of $(0,1)^n$ of dimension $m$. Then there exist constants $c = c(X) \in \N$ and $A = A(X) \in \N$ with the following property. For all $C>1$, there exist maps $f_l: T \times (0,1)^m \to X$, for $1 \leq l \leq c$, such that for any $t \in T$, we have
\[
\bigcup_{l=1}^c \im(f_{l,t}) = X_t,
\]
and for each $l \in \{1,\ldots,c\}$, the map $f_{l,t}$ is $(A/C,1,C)$-mild up to order $+\infty$.
\end{theorem}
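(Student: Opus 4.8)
The plan is to mimic the proof of Theorem \ref{thmcrpara}, replacing the power map $\phi^r$ by the exponential map $\phinf$ and the factor $x_I^r$ by $e^{1-1/x_I^\kappa}$ throughout. First I would apply the Pre-parametrization Theorem \ref{prepara} to $X$, obtaining finitely many power-subanalytic families $f\colon \cell \to (0,1)^n$ that are prepared in $x$ with associated bounded monomial map of bounded $C^1$-norm, and whose cells have walls of the same form. The number of such $f$ will be the constant $c = c(X)$. For each such $f$, I would set $f_l = f \circ \phinf$ (with $\cell$ the corresponding cell), and the goal is to show that $f_l$ is $(A/C,1,C)$-mild up to order $+\infty$ for a suitable $A = A(X)$, where $\kappa$ is chosen in terms of $C$ at the end (roughly $\kappa = 1/C$, so that the Gevrey order $1/\kappa$ equals $C$, using that $C>1$ forces $\kappa<1$ and hence $c(\kappa)=1$ in lemmas \ref{lemmaweaklymild2} and \ref{factorexp}).

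Next, arguing exactly as in Lemma \ref{lemmaprepxmap} and Remark \ref{remlemmaprepxmap}, I would reduce to the case that $f$ is a single bounded monomial component $b\colon \cell \to \R$ of bounded $C^1$-norm, and ignore the $t$ variables. Then, for a fixed $\nu \in \N^m$ and $x \in (0,1)^m$, I would apply the Fa\`a di Bruno formula \ref{faa} to $b \circ \phinf$ and bound a generic term
\[
\left| b^{(\lambda)}(\phinf(x)) \right| \prod_{j=1}^s \frac{ \left| \left( (\phinf)^{(l_j)}(x) \right)^{k_j} \right| }{ k_j! (l_j!)^{|k_j|} }.
\]
Choosing $I$ with $x_I = \min\{x_i \mid \nu_i \neq 0\}$, the relation $\nu_I = \sum_j |k_j| l_{j,I}$ yields indices $j,J$ with $k_{j,J} l_{j,I} \neq 0$, hence $\lambda_J \neq 0$; then, using that $b$ has bounded $C^1$-norm exactly as in the derivation of (\ref{crmain2}), I get a factor $\phinf_J(x)$ in front and am reduced to bounding $|(\phinf_\ell)^{(l_j)}(x)| / \phinf_\ell(x)$. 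For $\ell \neq J$ I apply Lemma \ref{lemmaweaklymild2}; for the one factor indexed by $(j,J)$ I replace one application of Lemma \ref{lemmaweaklymild2} by Lemma \ref{factorexp} (legitimate since $l_{j,I} \neq 0$), trading the front factor $\phinf_J(x)$ for the extra factor $e^{1-1/x_I^\kappa}$. The key cancellation is then $e^{1-1/x_I^\kappa} \cdot x_I^{-(\kappa+1)|\nu|} = e^{1-1/x_I^\kappa} x_I^{-(\kappa+1)|\nu|}$, which is \emph{bounded} on $(0,1)$ uniformly in $|\nu|$: indeed $e^{-1/x_I^\kappa} x_I^{-(\kappa+1)|\nu|} \le (C'|\nu|)^{C'|\nu|}$ type bounds absorb into the mildness constant, or more precisely one shows $\sup_{0<x<1} e^{1-1/x^\kappa} x^{-(\kappa+1)N} \le D_\kappa^N N!^{1/\kappa}$ for a constant $D_\kappa$ (this is essentially the content of the mildness of $e^{1-1/x^\kappa}$ used in \cite{mildpara}); the leftover $x^{-(\kappa+1)(\nu - \sum |k_j| l_j)} = 1$ since $\sum_j |k_j| l_j = \nu$. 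Feeding the resulting bound, of the shape $B_1 A_1^{|\lambda|} |\lambda|! \prod_j (B_2 A_2^{|l_j|} |l_j|!^{1+C})^{|k_j|} / (k_j!(l_j!)^{|k_j|})$ with $A_2 = c(\kappa) A_\cell$, into (a Gevrey-weighted version of) Lemma \ref{lemmaAB} produces the estimate $|(b\circ\phinf)^{(\nu)}(x)| \le B (A/\kappa)^{|\nu|} |\nu|!^{1+1/\kappa}$, i.e.\ $(A/C,B,C)$-mildness up to order $+\infty$; absorbing $B$ into $A$ gives $B=1$ and finishes the proof.

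The main obstacle I expect is the bookkeeping of the Gevrey exponent: unlike in Proposition \ref{proppowersub}, where $x_I^{r-|\nu|} \le 1$ is a clean inequality valid only for $|\nu| \le r$, here one works at order $+\infty$ and must track that the factorial powers accumulate to exactly $|\nu|!^{1+1/\kappa}$ rather than something worse, which requires the sharpened form of Lemma \ref{lemmaAB} (or equivalently the composition rule for $C$-mild functions, Proposition \ref{propmildop}(3)) applied with the mildness parameter $C = 1/\kappa$ rather than $0$. Concretely, one should rerun Lemma \ref{lemmaAB} with $|l_j|!^{1+C}$ in place of $|l_j|!$ — this is exactly the general Gevrey computation behind Proposition \ref{propmildop}(3) — and check that the constant coming out depends only on $\kappa$, $A_\cell$, $B_\cell$, $M_b$, $B_b$, and not on $t$ or $\nu$, so that both $c$ and $A$ are genuinely functions of $X$ alone. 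Finally, to get the precise statement ``for all $C>1$'', one sets $\kappa = 1/C \in (0,1)$ so that $c(\kappa) = 1$ in lemmas \ref{lemmaweaklymild2} and \ref{factorexp}, and absorbs the factor $c(\kappa)$ harmlessly; the constant $A$ produced satisfies the claimed $(A/C,1,C)$ form after renaming, and since the construction of the charts $f_l = f\circ\phinf$ did not depend on $C$ except through the choice of $\kappa$, the number $c$ of charts is independent of $C$, as required.
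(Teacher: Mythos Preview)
Your overall architecture matches the paper's proof exactly: apply Theorem~\ref{prepara}, reduce to a bounded monomial $b$ with bounded $C^1$-norm, run Fa\`a di Bruno on $b\circ\phinf$, extract the factor $\phinf_J(x)$ via the $C^1$ bound, and trade it for $e^{1-1/x_I^\kappa}$ using Lemma~\ref{factorexp}. The structure is right; two bookkeeping points are off.

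First, the key scalar estimate is miscalculated. One has
\[
\sup_{0<x<1} e^{1-1/x^\kappa} x^{-(\kappa+1)N}
\;\le\; e\left(\frac{(\kappa+1)N}{e\kappa}\right)^{(\kappa+1)N/\kappa}
\;\le\; e\left(\frac{\kappa+1}{\kappa}\right)^{(\kappa+1)N/\kappa} N!^{\,1+1/\kappa},
\]
not $D_\kappa^N N!^{1/\kappa}$; the exponent on the factorial is $(\kappa+1)/\kappa = 1+1/\kappa$. When you combine this with the single extra $|\nu|!$ coming from Lemma~\ref{lemmaAB}, the total factorial power is $|\nu|!^{\,2+1/\kappa}$, i.e.\ the mildness parameter is $C = 1+1/\kappa$, so the correct substitution is $\kappa = 1/(C-1)$, not $\kappa = 1/C$. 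This is exactly why the theorem needs $C>1$ (any $\kappa>0$ gives $C>1$), and your remark that ``$C>1$ forces $\kappa<1$'' is not needed and not true in general.

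Second, you do \emph{not} need a Gevrey-weighted version of Lemma~\ref{lemmaAB}. Lemmas~\ref{lemmaweaklymild2} and~\ref{factorexp} give bounds with plain $|l_j|!$, not $|l_j|!^{1+C}$; the only source of the extra factorial power is the single scalar factor $e^{1-1/x_I^\kappa} x_I^{-(\kappa+1)|\nu|}$, which is \emph{independent} of $\lambda$, $s$, $k_j$, $l_j$ and therefore pulls straight out of the Fa\`a di Bruno sum. After pulling it out, the remaining sum is exactly of the form handled by the ordinary Lemma~\ref{lemmaAB}. Your proposed ``rerun with $|l_j|!^{1+C}$'' is an unnecessary detour, and the shape $\prod_j (B_2 A_2^{|l_j|}|l_j|!^{1+C})^{|k_j|}$ you wrote down never actually arises. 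Fixing these two points, your proof becomes the paper's.
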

\begin{proof}
We use the same notation and strategy as in the proof of Theorem \ref{thmcrpara}. Doing so, using lemmas \ref{lemmaweaklymild2} and \ref{factorexp} instead, we find the following upper bound for a fixed term in the sum obtained by estimating the Fa\`a di Bruno formula:
\begin{multline*}
\left| b^{(\lambda)}(\phinf(x)) \right| \prod_{j=1}^s \frac{ \left| \left( (\phinf)^{(l_j)}(x) \right)^{k_j} \right| }{ k_j! (l_j!)^{|k_j|} } \\ \leq B_b M_b^{|\lambda|} |\lambda|! e^{1-1/x_I^\kappa} \prod_{j=1}^s \frac{ (x^{-(\kappa+1)l_j} B_C (c(\kappa) A_C)^{|l_j|} |l_j|!)^{|k_j|} }{ k_j! (l_j!)^{|k_j|} }.
\end{multline*}
Since $\sum_{j=1}^s |k_j|l_j = \nu$, and by the choice of $I$, we have that
\[
\prod_{j=1}^s \left( x^{-(\kappa+1)l_j} \right)^{|k_j|} \leq x_I^{-(\kappa+1)|\nu|}.
\]
One computes that for any $x_I \in (0,1)$:
\[
e^{1-1/x_I^\kappa} x_I^{-(\kappa+1)|\nu|} \leq e \left( \frac{(\kappa+1)|\nu|}{e\kappa} \right)^{(\kappa+1)|\nu|/\kappa} \leq e \left( \frac{\kappa+1}{\kappa} \right)^{(\kappa+1)|\nu|/\kappa} |\nu|!^{1+1/\kappa}.
\]
(See, for instance, \cite[Lemma 3.3]{mildpara}.) This factor is the same for each fixed term and thus can be moved out of the summation. By Lemma \ref{lemmaAB}, we find $\tilde{A},B>0$, such that
\[
\left| (b \circ \phinf)^{(\nu)}(x) \right| \leq \left( \frac{\kappa+1}{\kappa} \right)^{(\kappa+1)|\nu|/\kappa} |\nu|!^{1+1/\kappa} B (c(\kappa)\tilde{A})^{|\nu|} |\nu|!.
\]
Therefore, the composition is $(A,B,1+1/\kappa)$-mild up to order $+\infty$, where $A = 4\kappa\tilde{A}$ if $\kappa \geq 1$ and $A = \left(\frac{\kappa+1}{\kappa} \right)^{(\kappa+1)/\kappa} \tilde{A}$ if $0 < \kappa \leq 1$.
\end{proof}

\subsection{Remarks about the proofs} \label{secdiscuss}
In this section I discuss some differences and similarities between the techniques in this paper and my earlier work \cite{mildpara} and \cite{smoothpara}, and the original results obtained by Cluckers, Pila and Wilkie in \cite{unif}.

\begin{remark}
In both methods, one first constructs a ``good'' $C^1$-parametrization of $X$, which is in this case the Pre-parametrization Theorem \ref{prepara}. Next, one has to ensure that the higher order derivatives are bounded, by applying Proposition \ref{proppowersub} and reparametrize the domain such that it is $(0,1)^m$. In the proofs above, we simultaneously bound the higher order derivatives and reparametrize the domain. This simultaneous construction generalizes Proposition \ref{proppowersub}, and is the reason why we now achieve $cr^m$ charts in the $C^r$-parametrization theorem.

Now let me discuss the original approach by Cluckers, Pila and Wilkie in \cite{unif}. It is desirable to just apply Proposition \ref{proppowersub}, but then one encounters the following problem, which I will explain by an example. Consider the function $f: \cell \to (0,1)$ defined by $f(x_1,x_2) = x_1^3/x_2$, where $\cell = \{ (x_1,x_2) \in (0,1)^2 \mid x_1^{3/2} < x_2 < 1\}$. In particular, this function could potentially be an outcome of the Pre-parametrization Theorem \ref{prepara}. Now, if one applies Proposition \ref{proppowersub}, we obtain the map $f \circ P_r: P_r^{-1}(\cell) \to (0,1)$, where $P_r^{-1}(\cell) =\cell$ in this case, which is $(Ar,B,0)$-mild up to order $r$ for some $A,B>0$, as desired. However, if one maps $(0,1)^2$ onto $P_r^{-1}(\cell)$ using the map $\phi$ defined by
\[
(x_1,x_2) \mapsto (x_1, x_1^{3/2} + (1-x_1^{3/2})x_2),
\]
one sees that $\phi$ not mild up to order $r$, if $r > 1$, so neither is the composition with $f \circ P_r$. This problem is solved in \cite{unif} as follows. Since the walls of the cell $\cell$ are of the same form as the function $f$ defined on it, one can also apply Proposition \ref{proppowersub} to them. More precisely, consider the map $P_r^1: (0,1)^2 \to (0,1)^2$ defined by $P_r^1(x_1,x_2) = (x_1^r,x_2)$. Then $f \circ P_r \circ P_r^1: (P_r^1)^{-1}(\cell) \to (0,1)$ is $(Ar^2,B,0)$-mild up to order $r$, for some $A,B>0$ by Proposition \ref{propmildop}. One verifies that $(P_r^1)^{-1}(\cell) = \{ (x_1,x_2) \in (0,1)^2 \mid x_1^{3r/2} < x_2 < 1\}$. We see that in this way, Proposition \ref{proppowersub} is applied to the wall $x_1^{3/2}$ of $\cell$. Now the walls of this cell are $(Ar,B,0)$-mild up to order $r$ for some $A,B>0$, hence so is the map $(0,1)^2 \to (P_r^1)^{-1}(\cell)$. Note that if the walls are prepared in $x$, they can slightly change doing these operations. The map $P_r \circ P_r^1: (0,1)^2 \to (0,1)^2$ is given by $(x_1,x_2) \mapsto (x_1^{r^2},x_2^r)$. This explains why in \cite[Proposition 4.2.6]{unif}, they use a modified version of Proposition \ref{proppowersub} that uses instead of the map $P_r$, the map $P: (0,1)^m \to (0,1)^m$ defined by
\[
(x_1,\ldots,x_m) \mapsto (x_1^{r^m}, x_2^{r^{m-1}},\ldots,x_m^r).
\]
In this way, they solved this problem and were able to establish a $C^r$-parametrization theorem where the number of maps in polynomial in $r$. In \cite{smoothpara}, I carefully analyzed their work, and combining with Proposition \ref{propmildop}, which was perhaps not known to the authors of \cite{unif}, it follows that this method yields $cr^{m^3}$ charts.
\end{remark}

\begin{remark}
It is crucial in the approach in \cite{unif} that the walls of the cells are prepared in $x$ and that their associated bounded monomial maps have bounded $C^1$-norms (such that one can apply a variation on Proposition \ref{proppowersub}, as described above). This is not the case for the proof of the $C^r$-parametrization theorem in this paper. However, this does not add much generality, since in the proof of the Pre-parametrization Theorem, it is in general required to bound the $C^1$-norm of the associated bounded monomial maps of the walls of $\cell$ to ensure that the associated bounded monomial map of the function defined on $\cell$ also has bounded $C^1$-norm.
\end{remark}

\begin{remark}
The proof of the $C$-mild Parametrization Theorem somewhat resembles the proof of \cite[Proposition 4.3]{mildpara}, which also yields maps with the same mildness as the maps constructed in the $C$-mild Parametrization Theorem. Now in \cite{mildpara}, I prove a $C$-mild Parametrization Theorem for curves, where $C>0$, see \cite[Theorem 4.7]{mildpara}. This improvement seems to be restricted to the case of curves only. More precisely, the proof strategy starts again with the Pre-parametrization Theorem, applied to some curve in $(0,1)^n$. Let $f: \cell \to (0,1)^n$ be one of the maps of this parametrization, where $\cell$ is an open cell in $(0,1)$, hence $\cell = (a,b)$. Now the strategy applied here, is to consider the composition with $\phinf: (0,1) \to \cell$, which is in this case defined by
\[
x \mapsto a+(b-a)e^{1-1/x^\kappa}.
\]
Consider this map as the composition of the map $(0,1) \to \cell$ defined by $x \mapsto a + (b-a)x$ and the map $e^{1-1/x}: (0,1) \to (0,1)$. Now in the one dimensional case, one can turn around the order of these operations to obtain a map $(0,1) \to \cell$ of the form
\[
e^{1-\frac{1}{(a' + (b'-a')x)^\kappa}},
\]
which has a better interaction with bounded monomial maps. This better interaction is the key to a sharper result in \cite{mildpara}. Note that even in dimension one, I do not know if one can show that one may take $C>0$ in the $C$-mild Parametrization Theorem in this paper by simply improving the proof. If one tries to generalize the sharper version to higher dimensions, one will face some obstructions due to the inverse of the map $e^{1-1/x^\kappa}$ that will occur. Firstly, this map is not definable in $\RanR$, although this is not necessarily a problem, and secondly, its interaction with the walls is not as good as the in the case of the function $x \to x^r$, whose inverse is just $x^{1/r}$, which interacts nicely with monomials. To me, the construction in this paper seems more natural, so perhaps it is not possible to improve the sharper result of \cite{mildpara} to higher dimensions.
\end{remark}

\subsection{A diophantine application} \label{secappl}
The motivation to study parametrizations, at least in diophantine geometry, is to apply the determinant method developed by Bombieri and Pila (see \cite{determinant}). Coupled with a parametrization result, Pila and Wilkie proved the well known Counting Theorem in \cite{countthm}, which has been successfully used by Pila to prove the Andr\'e-Oort Conjecture for products of modular curves \cite{andreeoort}. See \cite{scanlon} and \cite{diogeo} for surveys on this subject.

For $q = a/b \in \Q$, where $\gcd(a,b) = 1$, we denote $H(q) = \max(|a|,|b|)$ for the height of the rational number $q$. For a tuple $q \in \Q^n$, we define the height the be the maximum of the heights of its coordinates. One can then approximate a set $X \subseteq \R^n$ using finite sets of rational points by imposing a threshold, often also denoted $H \in \N$, on the height:
\[
X(\Q,H) = \{ x \in X \cap \Q^n \mid H(q) \leq H\}.
\]
The idea is that the growth of the cardinality of the sets $X(\Q,H)$, for $H \to \infty$, gives us information about $X$. The Counting Theorem is a result in this fashion. Let $X^{\text{alg}}$ be the union of all semi-algebraic curves contained in $X$, and let $X^{\text{trans}} = X \setminus X^{\text{alg}}$.

\begin{theorem}[Counting Theorem, \cite{countthm}] \label{thmcount}
Suppose that $X$ is definable in an o-minimal expansion of the real field. Then for any $\epsilon > 0$, there exists a constant $c = c(\epsilon)$, such that for any $H \in \N$:
\[
\left| X^{\text{trans}}(\Q,H) \right| \leq cH^\epsilon.
\]
\end{theorem}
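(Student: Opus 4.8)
The plan is to deduce the Counting Theorem from a $C^r$-parametrization theorem together with the Bombieri--Pila determinant method of \cite{determinant}, organised as an induction on $\dim X$ that is carried out for definable \emph{families}, so that the inductive step has a bound that is uniform over the auxiliary parameters it produces. First I would reduce to the case $X\subseteq(0,1)^n$: covering $\R^n$ by the finitely many regions cut out by $x_i\lessgtr\pm1$ and applying the semialgebraic coordinate maps $x\mapsto 1/x$ (which fix $\Q$, preserve the height, and commute with taking the algebraic part) reduces to $X\subseteq[-1,1]^n$, and then a bounded affine rescaling reduces to $X\subseteq(0,1)^n$ at the cost of a bounded factor in the count. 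Replacing $X$ by a cell decomposition (uniform in the family parameter) it suffices to treat a single cell, which after a coordinate permutation is the graph of a definable map $g\colon U\to\R^{n-m}$ on an open cell $U\subseteq(0,1)^m$. Since an $n$-dimensional cell is open and hence entirely contained in $X^{\mathrm{alg}}$, only cells with $m\le n-1$ contribute to $X^{\mathrm{trans}}$; in particular $m<n$ in the inductive step, and the base case $m=0$ is trivial.

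Next, fix $\epsilon>0$, choose a degree $d$ large and then $r=r(m,n,d)$ suitably large, and apply a $C^r$-parametrization theorem to $X$: for $\RanR$-definable families this is Corollary \ref{corcrpara}, and in an arbitrary o-minimal expansion of the real field one uses the analogous parametrization theorem underlying \cite{countthm}. This produces finitely many maps $\phi\colon(0,1)^m\to X$ with $|\phi|_r\le 1$ whose images cover $X$, uniformly in the family parameter. On each chart I would subdivide $[0,1]^m$ into $T^m$ subcubes of side $1/T$ with $T=\lceil H^{\beta}\rceil$, and invoke the determinant estimate: on a given subcube, either fewer than $D:=\binom{n+d}{n}$ rational points of height at most $H$ lie in the image of $\phi$, or all of them lie on a single hypersurface of degree $d$ in $\R^n$ --- because the $D\times D$ determinant of monomials of degree $\le d$ evaluated at these points clears denominators to an integer of size at most a fixed power of $H$, while Taylor-expanding $\phi$ on a cube this small forces that determinant to be $\le c\,T^{-e}$ with $e=e(m,d)$; taking $d$ (and with it $r$) large makes $\beta=\beta(m,n,d,r)$ as small as we like, since $e(m,d)$ grows faster in $d$ than the $H$-exponent, using $m<n$. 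Fix $d$ so that $\beta m<\epsilon/2$.

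Summing over the finitely many charts and the $T^m\le H^{\epsilon/2}$ subcubes, $X^{\mathrm{trans}}(\Q,H)$ is contained in a set $S$ of at most $c_1H^{\epsilon/2}$ ``exceptional'' points together with $\bigcup_\lambda\bigl(X^{\mathrm{trans}}\cap V_\lambda\bigr)$, where the $V_\lambda$ are at most $c_1H^{\epsilon/2}$ algebraic hypersurfaces of degree $\le d$. For a fixed $V=V_\lambda$ one checks directly that $X^{\mathrm{trans}}\cap V\subseteq(X\cap V)^{\mathrm{trans}}$, since a positive-dimensional semialgebraic subset of $X\cap V$ is one of $X$. If $\dim(X\cap V)<m$, the inductive hypothesis (applied to the definable family $\{X\cap V_\lambda\}_\lambda$, uniformly in $\lambda$) bounds $|(X\cap V)^{\mathrm{trans}}(\Q,H)|$ by $c_2H^{\epsilon/2}$, and then $|X^{\mathrm{trans}}(\Q,H)|\le c_1H^{\epsilon/2}+c_1H^{\epsilon/2}\cdot c_2H^{\epsilon/2}\le c\,H^{\epsilon}$, as desired. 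The remaining possibility, $\dim(X\cap V)=m$, is the crux: here $\phi((0,1)^m)\subseteq V$, so $g$ satisfies a polynomial identity of degree $\le d$ on an open subset of $U$; one must argue that the transcendental part of $X\cap V$ is nonetheless of dimension $<m$, so that the inductive hypothesis still applies to it. In codimension one ($n=m+1$) this is immediate, since the identity exhibits $g$ as a branch of an algebraic function on an open set, whose graph is semialgebraic and hence absorbed into $(X\cap V)^{\mathrm{alg}}$; in higher codimension one iterates the determinant construction (cutting $X$ down by successive hypersurfaces) and uses o-minimal structure theory to peel off semialgebraic pieces, so that after finitely many steps the dimension of the transcendental residue genuinely drops.

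The main obstacle, then, is not any individual estimate but the design of the recursion: one must (i) phrase the entire statement for definable families so that the bound for $X\cap V_\lambda$ is uniform over the continuum of hypersurfaces that the determinant method produces; (ii) balance the free parameters $d$ and $r=r(d)$ so that the number of subcubes, hence of hypersurfaces, stays below $H^{\epsilon/2}$ while $\phi$ remains smooth enough for the determinant to vanish; and, most delicately, (iii) guarantee that intersecting with an algebraic hypersurface strictly decreases the dimension of the transcendental part --- which is exactly where the definition of $X^{\mathrm{alg}}$ is used, and where the bulk of the Pila--Wilkie argument lies. Everything else --- the reductions, the parametrization input (which is precisely where the results of this paper would slot in when $X$ is power-subanalytic), and the classical determinant bound --- is bookkeeping once this recursion is set up correctly.
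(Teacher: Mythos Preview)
The paper does not prove Theorem \ref{thmcount}: it is merely quoted as the Pila--Wilkie Counting Theorem from \cite{countthm}, with no proof or proof sketch given in the text. So there is nothing in the paper to compare your proposal against; the theorem functions here purely as background for the diophantine application in Section \ref{secappl}.

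That said, your outline is a reasonable summary of the original Pila--Wilkie argument: reduction to bounded sets and to graphs of definable functions, $C^r$-parametrization, the Bombieri--Pila determinant method to catch rational points on hypersurfaces of bounded degree, and an induction on dimension carried out uniformly over families so that the hypersurfaces produced at each step can be handled. You correctly identify the delicate point (iii), namely showing that intersecting with the auxiliary hypersurface drops the dimension of the transcendental part. One comment: the parametrization input needed here is the general o-minimal $C^r$-parametrization theorem of \cite{countthm}, not Corollary \ref{corcrpara}; the results of the present paper only apply to power-subanalytic sets, whereas Theorem \ref{thmcount} is stated for arbitrary o-minimal expansions of the real field, and moreover the Counting Theorem does not require any control on the number of charts as a function of $r$.
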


It is also conjectured in \cite{countthm} that a sharper upper bound holds if the set $X$ is definable in the particular o-minimal structure $\R_{\exp}$. This conjecture is known as Wilkie's Conjecture.

\begin{conj*}[Wilkie]
Suppose that $X$ is definable in $\R_{\exp}$. Then there exist constants $c = c(X)$ and $d = d(X)$, such that for any $H \in \N$ with $H >e$:
\[
\left| X^{\text{trans}}(\Q,H) \right| \leq c \log(H)^d.
\]
\end{conj*}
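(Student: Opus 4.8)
The plan is to run the Bombieri--Pila determinant method on a mild parametrization, as Pila does for Pfaffian curves in \cite{milddef}, but with an $\R_{\exp}$-analogue of Theorem \ref{thmmildpara} as input. First I would use the o-minimality of $\R_{\exp}$ (Wilkie's theorem) together with cell decomposition to reduce $X$ to finitely many charts, each a piece of the graph of a definable function, and then peel off $X^{\mathrm{alg}}$ by induction on $\dim X$: it is enough to show that for every $H$ there is a family of at most $c\log(H)^{d}$ real-algebraic hypersurfaces of bounded degree whose union contains $X^{\mathrm{trans}}(\Q,H)$, since intersecting $X$ with such a hypersurface drops the dimension and one recurses on the resulting pieces.

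The central step is the following. Parametrize each chart by finitely many maps $f_l\colon(0,1)^m\to X$ that are $(A/C,1,C)$-mild up to order $+\infty$, for a $C>1$ to be optimized --- this is where an $\R_{\exp}$-version of Theorem \ref{thmmildpara} is needed. Subdivide $(0,1)^m$ into $\sim T^{m}$ subcubes of side $1/T$; on a subcube, Taylor-expanding the monomials of degree $\le D$ and using $|f_l^{(\nu)}|\le (A/C)^{|\nu|}|\nu|!^{1+C}$, together with the closure of mild functions under products and compositions (Proposition \ref{propmildop}), the Bombieri--Pila determinant is bounded by a power of $A/T$ times a factorial factor governed by $C$ times a power of $H$. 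Balancing $D\sim\log H$ against $T\sim(\log H)^{O(1)}$ forces this quantity below $1$, so the determinant vanishes and every point of $X(\Q,H)$ over that subcube lies on one equation of degree $D$. It is mildness, not mere $C^{r}$-regularity, that makes this work: the factorial growth of the derivatives is exactly what lets the Taylor error absorb the factor $H$ while $D$ is pushed up to $\log H$. Summing over the $\sim T^{m}$ subcubes and the finitely many charts yields at most $c\log(H)^{d}$ hypersurfaces, closing the induction.

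The one genuinely new ingredient is thus the mild parametrization theorem for $\R_{\exp}$, since Theorem \ref{thmmildpara} of this paper covers only power-subanalytic families. The natural route is to replace the preparation theorem for $\RanR$ (Theorem \ref{prepara}) by the Lion--Rolin preparation theorem for log-exp functions, which after a cell decomposition writes a definable function as a unit times a monomial built from the coordinates, their iterated logarithms, and exponentials. One then designs, variable by variable, a substitution generalizing both $x\mapsto x^{r}$ (from the proof of Theorem \ref{thmcrpara}) and $x\mapsto e^{1-1/x^{\kappa}}$ (from the proof of Theorem \ref{thmmildpara}) that tames the boundary behaviour of these monomials, and checks --- via the Fa\`a di Bruno estimate of Lemma \ref{lemmaAB} and the composition rule of Proposition \ref{propmildop} --- that the composed maps remain $C$-mild up to order $+\infty$, uniformly in the parameters and with a uniform number of charts, in the style of Lemma \ref{lemmaweaklymild2} and Theorem \ref{thmmildpara}. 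For the power-subanalytic subclass $\RanR$, Theorem \ref{thmmildpara} already delivers this and the determinant step above runs verbatim.

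The hard part will be precisely this last verification for $\R_{\exp}$. Iterated logarithms are badly non-mild near $0$: the $n$-th derivative of $\log x$ is $(-1)^{n-1}(n-1)!\,x^{-n}$, so no fixed $A$ works on all of $(0,1)$ --- exactly as $x^{1/2}$ fails to become mild after the $r$-th power substitution unless one first controls a wall such as $x^{3/2}$, as in the discussion following Theorem \ref{thmcrpara}. Absorbing a single logarithm seems to demand an exponential substitution, and absorbing an iterated logarithm an iterated one; keeping the mildness parameters $(A/C,1,C)$ and the number of charts uniform through that iteration --- while simultaneously reparametrizing the walls of the cells, which themselves pick up log-exp terms and must be controlled by analogues of Lemma \ref{lemmaweaklymild2} and Lemma \ref{factorexp} --- is where I expect the real obstruction to sit.
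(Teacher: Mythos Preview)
This statement is Wilkie's Conjecture, and the paper presents it \emph{as a conjecture}: there is no proof in the paper to compare your proposal against. The paper cites it only as motivation, remarking that Theorem \ref{thmratpoints} ``could be seen as a step towards this conjecture''. So the baseline here is that no proof exists.

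Your proposal is not a proof either, and you are candid about this: you isolate the missing ingredient as a uniform $C$-mild parametrization theorem for sets definable in $\R_{\exp}$, and you correctly identify the iterated-logarithm behaviour as the place where the argument would have to do something genuinely new. That diagnosis is accurate, but it understates the difficulty. The existence of mild parametrizations is \emph{not} a general feature of o-minimal structures: the paper itself recalls (citing \cite{nomildthomas}) that definability in an o-minimal structure does not guarantee a mild parametrization. So the step you describe as ``the natural route'' --- running a Lion--Rolin style preparation and then designing substitutions variable by variable --- is not a verification waiting to be carried out, it is the heart of an open problem. In particular, there is currently no analogue of Theorem \ref{prepara} for $\R_{\exp}$ that produces units and monomials of a shape to which a single explicit substitution (power, exponential, or iterated exponential) can be applied while keeping the mildness constants and the number of charts uniform; the towers of $\exp$ and $\log$ in the Lion--Rolin normal form do not interact with mildness the way power monomials do.

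A smaller point: your determinant step is sketched for $C$-mild maps with $C>1$, which is what Theorem \ref{thmmildpara} delivers for power-subanalytic sets. Be aware that the quality of the bound one extracts from the Bombieri--Pila determinant depends on $C$; Pila's original argument in \cite{milddef} uses $0$-mildness, and pushing $C$ above $0$ costs something in the exponent of $\log H$. This is not fatal for a polylog bound, but the balancing you indicate (``$D\sim\log H$ against $T\sim(\log H)^{O(1)}$'') needs to be done carefully, and in any case it only becomes relevant once the parametrization input exists.

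In short: the strategy you outline is the well-known program going back to Pila, and your identification of the obstruction is on target, but what remains is not a technical lemma --- it is the conjecture.
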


The following result of \cite{unif} could be seen as a step towards this conjecture. For $x \in \R$, denote $[x]$ for the unique integer such that $[x] \leq x < [x]+1$.

\begin{theorem}[ {\cite[Theorem 2.3.1]{unif}} ] \label{thmratpoints}
Suppose that $X$ is a power-subanalytic family of subsets of $(0,1)^n$ of dimension $m$. Then there exist constants $c_1 = c_1(X)$, $c_2 = c_2(X)$, such that for any $t \in T$ and $H \in \N$ with $H > e$, $X_t(\Q,H)$ is contained in the union of at most $$c_1 \log(H)^{c_2}$$ algebraic hypersurfaces of degree at most $$\left[\log(H)^{m/(n-m)}\right].$$
\end{theorem}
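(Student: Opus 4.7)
The strategy is the Bombieri--Pila determinant method fueled by the uniform $C^r$-parametrization of Corollary \ref{corcrpara}, exactly as in \cite[Section 8]{unif}. The plan is first to parametrize $X$, then to run the determinant step box-by-box on each chart, and finally to balance all arithmetic parameters against the height $H$.

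First, I would apply Corollary \ref{corcrpara} to obtain a constant $c=c(X)\in\N$ such that for every $r\in\N$ there exist power-subanalytic maps $f_l:T\times(0,1)^m\to X$, for $1\le l\le cr^m$, covering $X_t$ fiberwise with $|f_{l,t}|_r\le 1$ uniformly in $t$. Fix $H>e$, set $d=[\log(H)^{m/(n-m)}]$, and choose an integer $r$ comparable to $d$ (to be optimised). Enumerate the monomials of degree $\le d$ in $n$ variables as $\phi_1,\dots,\phi_D$ with $D=\binom{n+d}{n}$, and partition $(0,1)^m$ into axis-aligned boxes of side $\delta>0$ (also to be fixed).

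Second, fix $t\in T$, a chart $f=f_{l,t}$, and a box $B$ of the grid. The determinant step says that for any $D$-tuple of points $q_1,\dots,q_D\in f(B)\cap X_t(\Q,H)$, the matrix $M=(\phi_i(q_j))_{i,j}$ either has $\det M=0$, in which case the $q_j$ lie on a common hypersurface of degree $\le d$, or satisfies the arithmetic lower bound $|\det M|\ge H^{-ndD}$ after clearing denominators. On the other hand, Taylor-expanding each $\phi_i\circ f$ on $B$ to order $\rho$, where $\rho$ is the largest integer with $\binom{m+\rho-1}{m}<D$, and using the uniform bound $|f|_r\le 1$ together with a Cauchy-type estimate on the derivatives of monomials of degree $\le d$, produces an analytic upper bound of the shape
\[
|\det M|\le C(m,n,d)\,\delta^{\,\rho\,(D-\binom{m+\rho-1}{m})},
\]
with $C(m,n,d)$ of polynomial size in $d$. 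A short calculation shows that if $\delta$ is chosen so that the right-hand side is strictly smaller than $H^{-ndD}$, then $\det M$ must vanish, placing all rational points in $f(B)$ on a single hypersurface of degree $\le d$.

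Third, the counting is immediate: at most one hypersurface per chart per box, hence at most $cr^m\cdot\delta^{-m}$ hypersurfaces in total. The optimisation of $\delta$ against the determinant inequality yields $\log(1/\delta)\asymp\log(H)$, and balancing $\rho\asymp (n/m)d$ against the monomial count $D$ pins down the exponent $m/(n-m)$ in the prescribed degree. With $r$ comparable to $d=[\log(H)^{m/(n-m)}]$, the product $cr^m\cdot\delta^{-m}$ becomes at most $c_1\log(H)^{c_2}$ for constants depending only on $X$. Uniformity in $t$ is automatic because both the number of charts and the $C^r$-norm bound coming from Corollary \ref{corcrpara} are independent of $t$; the main obstacle is therefore not a genuine mathematical difficulty but the careful arithmetic balancing of $r$, $d$, $\rho$ and $\delta$ as functions of $H$, which is where the precise exponent $m/(n-m)$ and the polylogarithmic constant $c_2$ arise.
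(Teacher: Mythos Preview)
The paper does not give its own proof of this theorem: it is quoted verbatim from \cite[Theorem~2.3.1]{unif} and used only as input to Corollary~\ref{corratpoints}, whose proof in the paper simply says ``follow the proof of \cite[Theorem~2.3.1]{unif}'' with the new chart count inserted. Your sketch is therefore not to be compared with anything in the present paper but with the argument in \cite{unif}, and in outline it is exactly that argument: parametrize, run the Bombieri--Pila determinant step on each chart restricted to small boxes, and balance the parameters.

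There is one genuine slip in your balancing. You write ``choose an integer $r$ comparable to $d$'', but the Taylor step requires expanding $\phi_i\circ f$ to order $\rho$, where $\rho$ is determined by $\binom{m+\rho-1}{m}<D=\binom{n+d}{n}$; this forces $\rho\asymp d^{n/m}$, not $\rho\asymp d$. Hence $r$ must be taken of order $d^{n/m}\asymp\log(H)^{n/(n-m)}$, which is exactly the quantity $b+1$ appearing in the paper's proof of Corollary~\ref{corratpoints}. With $r\asymp d$ and $n>m$ you could not Taylor-expand to the required order and the analytic upper bound on $|\det M|$ would not be available. Once $r$ is corrected to $d^{n/m}$, the count $cr^m\cdot\delta^{-m}$ becomes $c_1\log(H)^{c_2}$ as claimed, and your sketch is then the standard proof.
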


Their proof shows that if the number of maps of a $C^r$-parametrization is polynomial in $r$, then it is possible to deduce the polynomial dependence on $\log(H)$ in the theorem above. Since in this paper, the polynomial dependence is explicit, it follows immediately that this is also the case for the number of algebraic hypersurfaces. More precisely, we obtain the following.

\begin{corollary} \label{corratpoints}
In Theorem \ref{thmratpoints}, we may take $c_2(X) = 2\frac{mn}{n-m}$.
\end{corollary}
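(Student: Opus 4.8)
The plan is to feed the explicit $C^r$-parametrization of Corollary \ref{corcrpara} into the proof of Theorem \ref{thmratpoints} given in \cite{unif} and simply keep track of the exponents. Recall that that proof is an instance of the Bombieri--Pila determinant method (as in \cite{determinant,countthm}): given a $C^r$-parametrization of $X_t$ by $N(r)$ charts $\phi \colon (0,1)^m \to X_t$ whose $C^r$ supremum norm is at most $1$, one subdivides $(0,1)^m$ into subcubes of side $\delta$ and shows that, on each subcube, the rational points of height at most $H$ in the image either all lie on a single hypersurface of degree at most $d$, or else produce a nonzero $V \times V$ determinant of monomials, where $V = \binom{n+d}{n}$. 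This determinant is at least $H^{-dV}$ in absolute value but at most $c(m,n,d,r)\,\delta^{e}$ with $e = e(m,V) \asymp \tfrac{m}{m+1}V^{1+1/m}$, provided $r$ is at least of order $V^{1/m}$ so that enough derivatives of the chart are available.

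First I would record that, with the choices $d = \left[\log(H)^{m/(n-m)}\right]$ and $r \asymp \log(H)^{n/(n-m)}$ (which is $\asymp d^{n/m}$, exactly the threshold order needed above) made in the proof of Theorem \ref{thmratpoints}, one may take $\delta$ of size $\asymp H^{-dV/e}$, and then the number $\delta^{-m}$ of subcubes needed to cover $(0,1)^m$ is bounded independently of $H$: indeed $dVm/e \asymp 1/\log H$, so $H^{dVm/e}$ is bounded, and the correction factor $c(m,n,d,r)^{m/e}$ is also bounded since $m/e \to 0$ while $\log c(m,n,d,r)$ grows only polynomially in $\log H$. Consequently the number of degree-$d$ hypersurfaces produced by the method is at most a constant, depending only on $m$ and $n$, times the number of charts $N(r)$.

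Next I would quote the number of charts. By Corollary \ref{corcrpara}, together with Remark \ref{remamountcharts}, the family $X_t$ admits, for each $r$, a $C^r$-parametrization with at most $c\,r^{2m}$ charts whose $C^r$ supremum norm is at most $1$; the factor $r^{2m}$ rather than $r^m$ is precisely the price of passing from the $C^r$-norm of Definition \ref{defcrnorm} to the supremum norm used in \cite{unif}. Plugging $N(r) = c\,r^{2m}$ and $r \asymp \log(H)^{n/(n-m)}$ into the previous paragraph shows that $X_t(\Q,H)$ is contained in at most $c_1\,\log(H)^{2mn/(n-m)}$ hypersurfaces of degree at most $\left[\log(H)^{m/(n-m)}\right]$ (the implicit constants, and the constant $A = A(X)$ lost in the rescaling to the supremum norm, being absorbed into $c_1$), so that $c_2(X) = 2\tfrac{mn}{n-m}$ works.

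The argument is essentially bookkeeping, so the only delicate points are bookkeeping ones. One has to make sure the chosen order $r$ really is at least the threshold $\asymp V^{1/m}$ demanded by the determinant estimate, since otherwise $e$ shrinks and the subcube count is no longer $O(1)$; and one has to use the correct norm when counting charts, namely the $C^r$ supremum norm, hence $c\,r^{2m}$ charts, which is where the factor $2$ enters. (If one reworked the determinant estimate of \cite{unif} to run directly with the $C^r$-norm of Definition \ref{defcrnorm}, the degree-$m$ bound of Theorem \ref{thmcrpara} would presumably yield the sharper value $\tfrac{mn}{n-m}$, but we do not pursue this here.)
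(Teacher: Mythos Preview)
Your proposal is correct and takes essentially the same approach as the paper: both pass to the $C^r$ supremum norm to get $c\,r^{2m}$ charts via Remark \ref{remamountcharts}, then follow the proof of \cite[Theorem 2.3.1]{unif} and substitute $r \asymp \log(H)^{n/(n-m)}$ (in the notation of \cite{unif} this is $b+1$, with $c=2m$). You merely unpack more of the determinant-method internals, whereas the paper simply points to the relevant line numbers in \cite{unif}.
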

\begin{proof}
Firstly, in \cite{unif}, one uses the $C^r$-supremum norm. Therefore, we obtain (up to some constant depending on $X$) $r^{2m}$ maps instead of $r^m$ (see Remark \ref{remamountcharts}). One then follows the proof of \cite[Theorem 2.3.1]{unif} with (in their notation) $c = 2m$ (see the last line on p. 9 there). Then one finds in the end of the proof (on p. 12) that $C(b+1)^c$ algebraic hypersurfaces are required. Asymptotically (and up to some constant depending on $m$ and $n$), one has $b+1 = \log(H)^{n/(n-m)}$. Since we had that $c = 2m$, the claim follows.
\end{proof}

\begin{remark}
In \cite[Theorem 2.3.2]{unif} one has $c_2(X) = 0$ for subanalytic sets. It would be interesting to know if this can also be achieved for power-subanalytic sets.
\end{remark}

\bibliographystyle{alpha}
\bibliography{subanalyticlib}

\end{document}